\newtheorem{theorem}{Theorem}[section]
\newtheorem{lemma}{Lemma}[section]
\newtheorem{remark}{Remark}[section]
\newtheorem{corollary}{Corollary}[section]
\newtheorem{proposition}{Proposition}[section]
\numberwithin{equation}{section}
\begin{document}
\title{Numerical radii of accretive matrices}
\author{Yassine Bedrani, Fuad Kittaneh and Mohammed Sababheh}
\subjclass[2010]{15A60, 15B48, 47A12, 47A30, 47A63, 47A64.}
\keywords{Operator monotone function, sectorial matrix, accretive matrices, operator means, numerical radius} \maketitle

\pagestyle{myheadings}
\markboth{\centerline {}}
{\centerline {}}
\bigskip
\bigskip
\begin{abstract}
The numerical radius of a matrix is a scalar quantity that has many applications in the study of matrix analysis. Due to the difficulty in computing the numerical radius, inequalities bounding it have received a considerable attention in the literature. In this article, we present many new bounds for the numerical radius of accretive matrices. The importance of this study is the presence of a new approach that treats  a specific class of matrices, namely the accretive ones.  The new bounds provide a new set of inequalities, some of which can be considered as refinements of other existing ones, while others present new insight to some known results for positive matrices.
\end{abstract}
\section{Introduction}
Let $\mathcal{M}_n$ be the algebra of all complex $n\times n$ matrices. For $A\in\mathcal{M}_n$, the numerical radius $w(A)$ and the operator norm $\|A\|$ of $A$ are defined, respectively, by
$$w(A)=\max\{|\left<Ax,x\right>|:x\in\mathbb{C}^n, \|x\|=1\}$$ and
$$\|A\|=\max\{|\left<Ax,y\right>|:x,y\in\mathbb{C}^n, \|x\|=\|y\|=1\}.$$
It is well known that $w(\cdot)$ defines a norm on $\mathcal{M}_n$ that is equivalent to the operator norm, via the relation \cite[p 114.]{Halmos}
\begin{equation}\label{eq_num_oper}
\frac{1}{2}\|A\|\leq w(A)\leq \|A\|, A\in\mathcal{M}_n.
\end{equation}
Interest in bounding the numerical radius has grown due to the fact that computing the operator norm is much easier than that of the numerical radius. For this reason, we find many research papers presenting bounds for $w(A)$ in terms of the operator norm. 

The numerical range of $A\in \mathcal{M}_n$ is defined by the set
$$W(A)=\{\left<Ax,x\right>:x\in\mathbb{C}^n,\|x\|=1\}.$$ 
If $W(A)\subset (0,\infty),$ we say that $A$ is positive, and we simply write $A> 0.$ It is well known that when $A> 0,$ we have $w(A)=\|A\|.$ A more general class of matrices than that of positive ones is the so called accretive matrices. A matrix $A\in\mathcal{M}_n$ is said to be accretive when $\Re A>0.$ Notice that 
$$\Re A>0\Leftrightarrow W(A)\subset (0,\infty)\times (-\infty,\infty)\subset \mathbb{C},$$  where $\Re A=\frac{A+A^*}{2}$ is the real part of $A$. It is clear that when $A$ is positive, it is necessarily accretive.
 
 The main goal of this article is to present many new relations for $w(A)$ when $A$ is accretive. Some of these new forms present a new direction in this study, while others can be looked at as refinements of some known results, in a new setting.
 
When talking about accretive matrices, we need to introduce sectorial matrices.  A matrix $A\in\mathcal{M}_n$ is said to be sectorial if, for some $0\leq \alpha<\frac{\pi}{2}$, we have
$$W(A)\subset S_{\alpha}:=\{z\in\mathbb{C}:|\Im z|\leq \tan\alpha\; \Re z\}.$$ The smallest such $\alpha$ will be called the sectorial index of $A$. When $W(A)\subset S_{\alpha},$ we will write $A\in \mathcal{S}_{\alpha}.$	Further, in the sequel, it will be implicitly understood that the notion of $S_{\alpha}$ and $\mathcal{S}_{\alpha}$ are defined only when $0\leq \alpha<\frac{\pi}{2}.$

Recently, in \cite{bedr} the operator mean of two accretive matrices $A,B\in \mathcal{M}_n$ has been defined by
 \begin{align}\label{eq_def_sigma}
 A\sigma_f B=\int^1_0 (A!_sB)\;d\nu_f (s),
\end{align}   
where $A!_sB=((1-s)A^{-1}+sB^{-1})^{-1}$ is the harmonic mean of $A,B$, the function $f:(0,\infty)\longrightarrow(0,\infty)$ is an operator monotone function with $f(1)=1$ and $\nu_f$ is the probability measure characterizing $\sigma_f$.

Moreover, they also characterize the operator monotone function for an accretive matrix: let $A\in\mathcal{S}_{\alpha}$ and $f:(0,\infty)\longrightarrow(0,\infty)$ be an operator monotone function with $f(1)=1,$ then

\begin{align}\label{eq_def_f(A)}
f(A)=\int^1_0 ((1-s)I+sA^{-1})^{-1}\;d\nu_f (s),
\end{align}

where $\nu_f$ is probability measure satisfying $f(x)=\int^1_0 ((1-s)+sx^{-1})^{-1}\;d\nu_f (s).$

This definition was motivated by the same definition for positive matrices, and many properties of operator mean of accretive matrices were given in \cite{bedr}.  In \cite{ftan},  the logarithmic mean of accretive $A,B$ is defined by
\begin{align}\label{loga_matrices}
\mathcal{L}(A,B)=\int^{1}_{0}A\sharp_t B\;dt.
\end{align}
The Heinz mean is defined in \cite{YMao} as 
\begin{align}\label{heinz_for_matrices}
\mathcal{H}_t(A,B)=\dfrac{A\sharp_t B+A\sharp_{1-t} B}{2},\;0\leq t\leq 1,
\end{align}
where $\sharp_t$ stands for the weighted geometric mean, which corresponds to the operator monotone function $f(x)=x^t,0\leq t\leq 1.$

When $A\in\mathcal{S}_0,$ we have $w(A)=\|A\|.$ Our first simple observation will be that when $A\in\mathcal{S}_{\alpha},$ we have 
$$\cos\alpha\;\|A\|\leq w(A)\leq \|A\|.$$ Notice that this new inequality is better than \eqref{eq_num_oper} when $0\leq \alpha< \frac{\pi}{3}.$ Many extensions of some numerical radius inequalities will be shown for accretive and sectorial matrices, including power inequalities and sub multiplicative behavior.

Another set of new inequalities for accretive matrices is the treatment of $w(f(A))$ and $w(A\sigma B)$, where $f$ is an operator monotone function and $\sigma$ is an operator mean. Such inequalities have not been treated in the literature due to the fact that when $A,B$ are positive, $f(A)$ and $A\sigma B$ are positive, and hence their numerical radius and operator norms coincide. So, when $A,B$ are accretive, this presents a new direction.

Many other results will be presented, like subadditivity of the numerical radius, relations among $w(A)$ and $w(\Re A)$, and many others.

 For our purpose, we will need the following  notation . 
$$ \mathfrak{m}=\{f(x)\;{\text{where}}\;f:(0,\infty)\to (0,\infty)\; {\text{is an operator monotone function with}} \;f(1)=1\}.$$

\section{Some preliminary discussion}
In this part of the paper, we discuss some needed results and terminologies related to accretive matrices.
\begin{lemma} \cite{bedr}\label{lemma_real_a_sigma_b_less}
Let $ A, B\in\mathcal{S}_{\alpha} $. If $f\in \mathfrak{m},$ then
\begin{equation}
\Re(A\sigma_f B)\leq \sec^{2}\alpha\;(\Re A)\;\sigma_f\;(\Re B).
\end{equation}
\end{lemma}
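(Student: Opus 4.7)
The plan is to reduce the statement to the pointwise harmonic-mean case and then to deduce that case from two elementary real-part inequalities derived via the Cartesian decomposition. Since
$$A\sigma_f B=\int_0^1 A!_s B\,d\nu_f(s)\quad\text{and}\quad (\Re A)\sigma_f(\Re B)=\int_0^1 (\Re A)!_s(\Re B)\,d\nu_f(s),$$
and since taking the real part commutes with the integral against $\nu_f$, it suffices to prove
$$\Re(A!_s B)\leq\sec^2\alpha\,(\Re A)!_s(\Re B)\qquad\text{for every }s\in[0,1].$$
Integrating this pointwise bound against $\nu_f$ produces the claimed inequality.

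For the key step I would work with the Cartesian form of a generic $X\in\mathcal{S}_\alpha$. Writing $X=H+iK$ with $H=\Re X>0$ and introducing the Hermitian matrix $M:=H^{-1/2}KH^{-1/2}$, a direct computation gives
$$\Re(X^{-1})=H^{-1/2}(I+M^2)^{-1}H^{-1/2}.$$
From $0\leq(I+M^2)^{-1}\leq I$ one obtains the accretive bound $\Re(X^{-1})\leq(\Re X)^{-1}$, valid for every accretive $X$. On the other hand, the sectorial assumption $|\langle Kx,x\rangle|\leq\tan\alpha\,\langle Hx,x\rangle$ translates to $\|M\|\leq\tan\alpha$, so $I+M^2\leq\sec^2\alpha\,I$, whence $(I+M^2)^{-1}\geq\cos^2\alpha\,I$ and
$$\Re(X^{-1})\geq\cos^2\alpha\,(\Re X)^{-1},\qquad\text{equivalently}\qquad (\Re X)^{-1}\leq\sec^2\alpha\,\Re(X^{-1}).$$

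Apply the first inequality to $X:=(1-s)A^{-1}+sB^{-1}$, which is accretive as a positive combination of the accretive matrices $A^{-1}$ and $B^{-1}$; apply the second inequality separately to $A$ and to $B$; and use the operator antitonicity of inversion on positive matrices. The chain
\begin{align*}
\Re(A!_s B)=\Re(X^{-1})&\leq(\Re X)^{-1}=\bigl((1-s)\Re(A^{-1})+s\Re(B^{-1})\bigr)^{-1}\\
&\leq\sec^2\alpha\,\bigl((1-s)(\Re A)^{-1}+s(\Re B)^{-1}\bigr)^{-1}=\sec^2\alpha\,(\Re A)!_s(\Re B)
\end{align*}
then completes the pointwise bound, and integration against $\nu_f$ finishes the lemma. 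The only delicate point is the bookkeeping around the inversion: the sectorial hypothesis yields a \emph{lower} bound on $\Re(A^{-1})$ in terms of $(\Re A)^{-1}$, and it is its inverse that installs the $\sec^2\alpha$ factor in the final inequality. Everything else is functional calculus applied to the single identity $\Re(X^{-1})=H^{-1/2}(I+M^2)^{-1}H^{-1/2}$.
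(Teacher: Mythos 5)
Your proof is correct. Note that the paper itself gives no proof of this lemma --- it is quoted from reference [5] (\emph{From positive to accretive matrices}) --- so there is nothing internal to compare against; but your argument is complete and is the natural one: reduce via the integral representation $A\sigma_f B=\int_0^1 (A!_sB)\,d\nu_f(s)$ to the harmonic-mean case, and then use the two-sided bound $\cos^2\alpha\,(\Re X)^{-1}\leq \Re(X^{-1})\leq (\Re X)^{-1}$ coming from the identity $\Re(X^{-1})=H^{-1/2}(I+M^2)^{-1}H^{-1/2}$ together with $\|M\|\leq\tan\alpha$. The bookkeeping of which direction of that bound is applied where (the accretive upper bound to $X=(1-s)A^{-1}+sB^{-1}$, the sectorial lower bound to $A$ and $B$ before inverting) is exactly right, and this is where the single factor $\sec^2\alpha$ comes from.
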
 
\begin{lemma}\label{lemma_ando_zhan}\cite{ando_zhan} Let $A, B\in \mathcal{M}_n$ be two positive  matrices. Then, for any  non-negative operator monotone function $f$ on $\left[0,\infty  \right)$,
\begin{equation}
|||f(A+B)|||\leq |||f(A)+f(B)|||
\end{equation}
\end{lemma}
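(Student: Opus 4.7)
The plan is to combine three classical tools: the L\"owner integral representation of operator monotone functions, the Ky Fan dominance theorem for unitarily invariant norms, and a weak majorization inequality for the parallel sum kernel.

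First, by L\"owner's theorem any non-negative operator monotone function on $[0,\infty)$ admits the integral representation
$$f(t)=\alpha+\beta t+\int_{0}^{\infty}\frac{\lambda t}{\lambda+t}\,d\mu(\lambda),$$
with $\alpha=f(0)\geq 0$, $\beta\geq 0$, and $\mu$ a positive Borel measure on $(0,\infty)$. Transferring this through the matrix functional calculus decomposes each of $f(A)$, $f(B)$, and $f(A+B)$ into a constant, a linear, and a parallel-sum part. By Ky Fan dominance, the desired norm inequality for \emph{every} unitarily invariant norm is equivalent to the weak majorization
$$\sum_{i=1}^{k}\lambda_{i}\bigl(f(A+B)\bigr)\leq\sum_{i=1}^{k}\lambda_{i}\bigl(f(A)+f(B)\bigr),\qquad k=1,\ldots,n,$$
where eigenvalues are arranged in decreasing order (singular values and eigenvalues coincide here because every operator in sight is positive semidefinite). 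The constant piece contributes $\alpha I$ on the left versus $2\alpha I$ on the right (harmless since $\alpha\geq 0$), and the linear piece $\beta(A+B)$ cancels, so the whole task is reduced to handling the kernel $g_{\lambda}(t)=\frac{\lambda t}{\lambda+t}$ pointwise in $\lambda$.

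For each fixed $\lambda>0$ the aim is to prove
$$g_{\lambda}(A+B)\prec_{w}g_{\lambda}(A)+g_{\lambda}(B)$$
and then integrate against $d\mu(\lambda)$, which is legitimate because weak majorization is preserved under positive linear combinations. The route I would take starts from the identity $g_{\lambda}(X)=\lambda I-\lambda^{2}(\lambda I+X)^{-1}$, which recasts the problem as a comparison of resolvents, and then applies an Ando-style block-matrix device: embed $A$ and $B$ into a $2\times 2$ positive block, exploit the operator convexity of $X\mapsto(\lambda I+X)^{-1}$, and compress against a suitable isometry to extract the eigenvalue-sum inequality via Ky Fan's maximum principle
$$\sum_{i=1}^{k}\lambda_{i}(X)=\max\{\operatorname{tr}(V^{*}XV):V^{*}V=I_{k}\}$$
for Hermitian $X$.

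The main obstacle is precisely this last step. The scalar subadditivity $g_{\lambda}(a+b)\leq g_{\lambda}(a)+g_{\lambda}(b)$, although elementary, does \emph{not} in general lift to the operator inequality $g_{\lambda}(A+B)\leq g_{\lambda}(A)+g_{\lambda}(B)$, so one must settle for the strictly weaker weak-majorization statement and cannot avoid a genuine compression argument. Executing the block-matrix/compression step cleanly, and then checking that the pointwise-in-$\lambda$ weak majorization survives integration against the L\"owner measure $\mu$ (together with the already handled constant and linear terms), is the technical heart of the argument and is where I would expect the bulk of the work to lie.
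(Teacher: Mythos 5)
The paper contains no proof of this lemma at all---it is quoted verbatim from the cited Ando--Zhan reference---so your sketch can only be judged against the argument in that source. Its skeleton (L\"owner representation, Ky Fan dominance, reduction to the parallel-sum kernel $g_{\lambda}(t)=\lambda t/(\lambda+t)$) is indeed the right one, and you are correct that $g_{\lambda}(A+B)\leq g_{\lambda}(A)+g_{\lambda}(B)$ fails as an operator inequality, so some compression device is unavoidable.

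The genuine gap is the integration step. The claim that the family of weak majorizations $g_{\lambda}(A+B)\prec_{w}g_{\lambda}(A)+g_{\lambda}(B)$ may be integrated against $d\mu(\lambda)$ ``because weak majorization is preserved under positive linear combinations'' is false: the Ky Fan functionals $Y\mapsto\sum_{i=1}^{k}\lambda_{i}(Y)$ are subadditive rather than additive, so from $X_{\lambda}\prec_{w}Y_{\lambda}$ one only gets $\sum_{i=1}^{k}\lambda_{i}\bigl(\int X_{\lambda}\,d\mu\bigr)\leq\int\sum_{i=1}^{k}\lambda_{i}(Y_{\lambda})\,d\mu$, and this last quantity \emph{dominates} $\sum_{i=1}^{k}\lambda_{i}\bigl(\int Y_{\lambda}\,d\mu\bigr)$ instead of being dominated by it. A diagonal example: $\mathrm{diag}(1,0)\prec_{w}\mathrm{diag}(1,0)$ and $\mathrm{diag}(1,0)\prec_{w}\mathrm{diag}(0,1)$, yet $\mathrm{diag}(2,0)\not\prec_{w}\mathrm{diag}(1,1)$. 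The same defect affects your dismissal of the constant term $\alpha I$ versus $2\alpha I$. The standard repair---and the route the cited source takes---is to fix $k$ and a single rank-$k$ spectral projection $P$ of $A+B$, which is simultaneously a top-$k$ spectral projection of every $g_{\lambda}(A+B)$ because each $g_{\lambda}$ is increasing, and to prove the \emph{linear} inequality $\mathrm{tr}\bigl(Pg_{\lambda}(A+B)\bigr)\leq\mathrm{tr}\bigl(P(g_{\lambda}(A)+g_{\lambda}(B))\bigr)$ for that fixed $P$ and every $\lambda$. Linearity of $X\mapsto\mathrm{tr}(PX)$ makes this compatible with the integral representation, the $\beta(A+B)$ terms then genuinely cancel, $\alpha\,\mathrm{tr}P\leq 2\alpha\,\mathrm{tr}P$ is harmless, and a final application of Ky Fan's maximum principle to $\mathrm{tr}\bigl(P(f(A)+f(B))\bigr)$ closes the argument. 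Proving that fixed-$P$ trace inequality is the real content of the theorem, and it is precisely the step you defer; as written, the proposal is not a proof.
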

\begin{lemma}\cite{Ando_2}\label{sigma_norm} Let $A, B\in\mathcal{M}_n$ be positive. If $f\in \mathfrak{m},$ then 
\begin{align}
||| A \sigma_f B|||\leq ||| A|||\sigma_f||| B|||.
\end{align}
\end{lemma}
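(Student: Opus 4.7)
The plan is to leverage the integral representation \eqref{eq_def_sigma},
\[
A\sigma_f B=\int^1_0 (A !_s B)\,d\nu_f(s),
\]
and apply the triangle inequality for the unitarily invariant norm $|||\cdot|||$ to obtain
\[
|||A\sigma_f B|||\leq \int^1_0 |||A !_s B|||\,d\nu_f(s).
\]
Since the representing measure $\nu_f$ also characterizes $f$ on the scalar level via $f(x)=\int^1_0((1-s)+sx^{-1})^{-1}\,d\nu_f(s)$, one has
\[
|||A|||\,\sigma_f\,|||B|||=\int^1_0\bigl(|||A|||\,!_s\,|||B|||\bigr)\,d\nu_f(s),
\]
so the proof reduces to the one-parameter family of weighted harmonic means: it suffices to show
\[
|||A !_s B|||\leq |||A|||\,!_s\,|||B|||,\qquad s\in(0,1).
\]

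For this harmonic-mean step, I would first dispatch the operator-norm case using the explicit formula $A!_s B=((1-s)A^{-1}+sB^{-1})^{-1}$: from $A\leq \|A\|\,I$ and $B\leq \|B\|\,I$ one gets $(1-s)A^{-1}+sB^{-1}\geq((1-s)/\|A\|+s/\|B\|)I$, which upon inversion yields $A!_s B\leq(\|A\|\,!_s\,\|B\|)I$ and hence the bound after taking $\|\cdot\|$. For a general unitarily invariant norm, this scalar-matrix upper bound is too coarse, so I would instead exploit the Schur-complement identity
\[
A!_s B=\tfrac{1}{1-s}\bigl(A-sA(sA+(1-s)B)^{-1}A\bigr),
\]
together with its symmetric counterpart in $B$, which realize $A!_s B$ as a positive operator simultaneously dominated by $A/(1-s)$ and by $B/s$. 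Combined with the Kubo--Ando transformer inequality $T^*(A\sigma B)T\leq (T^*AT)\sigma(T^*BT)$ for invertible $T$, and the joint concavity of the operator mean, this should reduce the estimate to a diagonal configuration accessible via the Ky Fan dominance principle / weak majorization of singular values.

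The main obstacle, and the true content of the lemma, is precisely the passage from $\|\cdot\|$ to a general unitarily invariant norm. The naive bound $A!_s B\leq(\|A\|\,!_s\,\|B\|)I$ loses an unavoidable factor of $|||I|||$, which can be as large as $n$ for the trace norm, so one cannot simply transplant the operator-norm argument. Circumventing this requires a finer analysis that accounts for the full singular-value distribution of $A$ and $B$, most plausibly via a weak-majorization argument tailored to the harmonic-mean structure. This is the step I expect to require the most work, and the reason the lemma is separately citable from the routine $\|\cdot\|$ case.
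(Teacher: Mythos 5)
This lemma is not proved in the paper at all: it is quoted verbatim from Ando's survey \cite{Ando_2}, so there is no in-paper argument to measure you against. Judging your proposal on its own terms: the first reduction is exactly right. The integral representation $A\sigma_f B=\int_0^1(A!_sB)\,d\nu_f(s)$, the triangle inequality, and the matching scalar identity $|||A|||\,\sigma_f\,|||B|||=\int_0^1(|||A|||\,!_s\,|||B|||)\,d\nu_f(s)$ correctly reduce everything to the single inequality $|||A!_sB|||\le|||A|||\,!_s\,|||B|||$, and your operator-norm argument for that inequality is sound. You are also right that this reduction is where the operator-monotonicity of $f$ enters and that the harmonic-mean step is the real content.

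However, the harmonic-mean step for a general unitarily invariant norm is left genuinely open, and the tools you propose do not close it. The Schur-complement identity you quote is correct, but all it yields is the pair of dominations $A!_sB\le\frac{1}{1-s}A$ and $A!_sB\le\frac{1}{s}B$, hence $|||A!_sB|||\le\min\bigl(\tfrac{1}{1-s}|||A|||,\tfrac{1}{s}|||B|||\bigr)$; for $s=\tfrac12$ and $|||A|||=|||B|||=1$ this gives the bound $2$ while the claim requires $1$, so domination plus the transformer inequality cannot suffice. The missing ingredient is the variational (Anderson--Duffin/Pusz--Woronowicz) characterization
\begin{equation*}
\left<(A!_sB)x,x\right>=\inf_{y+z=x}\left\{\frac{\left<Ay,y\right>}{1-s}+\frac{\left<Bz,z\right>}{s}\right\},
\end{equation*}
which is what actually makes the Ky Fan reduction work: for a rank-$k$ projection with orthonormal basis $u_1,\dots,u_k$ of its range, choose $y_i=cu_i$, $z_i=(1-c)u_i$ with the same scalar $c$ for every $i$, sum over $i$ to get $\|A!_sB\|_{(k)}\le\frac{c^2}{1-s}\|A\|_{(k)}+\frac{(1-c)^2}{s}\|B\|_{(k)}$, and then optimize over $c$ to recover exactly $\|A\|_{(k)}\,!_s\,\|B\|_{(k)}$; Ky Fan dominance then gives all unitarily invariant norms. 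Without this (or an equivalent extremal representation), your plan stalls precisely at the step you yourself flag as the hard one, so as written the proposal is an outline with a gap rather than a proof.
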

\begin{lemma}\cite{bedr}\label{lemma_f_real_sec_f} Let $ A\in\mathcal{S}_\alpha. $ If $f\in \mathfrak{m},$ then 
\begin{equation}
f(\Re A)\leq\Re (f(A))\leq\ \sec^{2}\alpha \;f(\Re A)
\end{equation}
\end{lemma}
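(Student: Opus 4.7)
The plan is to use the defining integral \eqref{eq_def_f(A)} to push the real part inside and reduce both inequalities to pointwise statements in the integration parameter $s$. Setting $g_s(T) = ((1-s)I + sT^{-1})^{-1}$, both $f(A) = \int_0^1 g_s(A)\,d\nu_f(s)$ and $f(\Re A) = \int_0^1 g_s(\Re A)\,d\nu_f(s)$ are integrals against the same probability measure $\nu_f$, and since $\Re$ commutes with the integral, it suffices to establish
\[
  g_s(\Re A) \ \leq\ \Re\, g_s(A) \ \leq\ \sec^2\alpha\ g_s(\Re A), \qquad s \in (0,1).
\]

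For the upper bound, I recognize $g_s(A) = I \,!_s\, A$, where $!_s$ is the weighted harmonic mean associated with the operator monotone function $x \mapsto x/((1-s)x+s) \in \mathfrak{m}$. Since $I \in \mathcal{S}_0 \subset \mathcal{S}_\alpha$, Lemma~\ref{lemma_real_a_sigma_b_less} applied with $B = I$ and $\sigma_f = !_s$ directly yields $\Re(I \,!_s\, A) \leq \sec^2\alpha\,(I \,!_s\, \Re A)$, which is exactly the desired pointwise upper bound.

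The lower bound is what I expect to be the main obstacle, since Lemma~\ref{lemma_real_a_sigma_b_less} is one-sided and no other tool in the excerpt produces an estimate of this direction. My approach is to introduce $Z = (1-s)A + sI$ (accretive, since $\Re Z = (1-s)\Re A + sI > 0$) and use the algebraic rewriting $g_s(A) = AZ^{-1} = \tfrac{1}{1-s}(I - sZ^{-1})$, together with the parallel identity $g_s(\Re A) = \tfrac{1}{1-s}(I - s(\Re Z)^{-1})$. The pointwise inequality $g_s(\Re A) \leq \Re\, g_s(A)$ then collapses to the elementary claim $\Re(Z^{-1}) \leq (\Re Z)^{-1}$ for every accretive $Z$. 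I would prove this from the identity $\Re(Z^{-1}) = Z^{-*}(\Re Z)Z^{-1}$: conjugating by $(\Re Z)^{1/2}$ the claim becomes $T^*T \leq I$ for $T = (\Re Z)^{1/2} Z^{-1}(\Re Z)^{1/2}$; and setting $W = (\Re Z)^{-1/2}(\Im Z)(\Re Z)^{-1/2}$, which is self-adjoint, one computes $T^{-1} = I + iW$, whence $T^{-1}T^{-*} = (I + iW)(I - iW) = I + W^2 \geq I$, i.e.\ $T^*T \leq I$. Integrating the two pointwise bounds against $d\nu_f(s)$ finishes the proof.
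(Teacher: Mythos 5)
Your argument is correct, but note that there is nothing in this paper to compare it against: Lemma~\ref{lemma_f_real_sec_f} is stated as a quoted result from \cite{bedr} and the present paper supplies no proof of it. Taken on its own terms, your proof is sound and essentially reconstructs the natural argument. The reduction via \eqref{eq_def_f(A)} to the pointwise bounds on $g_s(T)=((1-s)I+sT^{-1})^{-1}=I\,!_s\,T$ is legitimate, since the same probability measure $\nu_f$ represents $f(\Re A)$ by the functional calculus and $\Re$ commutes with the Bochner integral. The upper bound is indeed an immediate instance of Lemma~\ref{lemma_real_a_sigma_b_less} with the pair $(I,A)$ and the mean $!_s$ (whose representing function $x\mapsto x/((1-s)x+s)$ lies in $\mathfrak{m}$); you could even apply that lemma once to $I\sigma_f A=f(A)$ and skip the pointwise step for this half. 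Your lower bound is the real content, and the chain checks out: for $Z=(1-s)A+sI$ one has $g_s(A)=\tfrac{1}{1-s}(I-sZ^{-1})$ and $g_s(\Re A)=\tfrac{1}{1-s}(I-s(\Re Z)^{-1})$, so for $s\in(0,1)$ the claim reduces to $\Re(Z^{-1})\leq(\Re Z)^{-1}$, and your verification via $\Re(Z^{-1})=Z^{-*}(\Re Z)Z^{-1}$ and $(T^{*}T)^{-1}=T^{-1}T^{-*}=I+W^{2}\geq I$ is correct (the endpoints $s=0,1$, where $\nu_f$ may carry mass, give equalities, so nothing is lost). Two cosmetic slips worth fixing: the identity matrix occupies the \emph{first} slot of $!_s$ in $g_s(A)=I\,!_s\,A$, not the slot you label $B$, and the unweighted inequality $\Re(Z^{-1})\leq(\Re Z)^{-1}$ needs only accretivity of $Z$, which your argument in fact only uses.
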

\begin{lemma}\cite{bedr}\label{realf_fnorm} Let $ A\in\mathcal{S}_\alpha. $ If $f\in \mathfrak{m},$ then 
\begin{align*}
f(\|\Re A\|)\leq\|\Re f(A)\|\leq\sec^2\alpha\;f(\|\Re A\|).
\end{align*}
\end{lemma}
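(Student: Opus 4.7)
The statement I want to prove is the norm-level version of the preceding Lemma \ref{lemma_f_real_sec_f}, which already gives the matrix inequality
\[
f(\Re A)\le \Re(f(A))\le \sec^{2}\alpha\;f(\Re A).
\]
My plan is to take operator norms of this chain and then pass $f$ through the norm on the outer terms.

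First I would observe that because $A\in\mathcal{S}_\alpha$ one has $\Re A>0$, so $f(\Re A)$ is a positive definite matrix; moreover $f(\Re A)\le \Re(f(A))$ forces $\Re(f(A))$ to be positive as well. Consequently all three terms in the above chain are positive, and I may apply the elementary fact that the operator norm is monotone on positive matrices (if $0\le X\le Y$ then $\|X\|\le\|Y\|$, since for positive matrices the operator norm coincides with the largest eigenvalue and Weyl monotonicity applies). This yields
\[
\|f(\Re A)\|\le \|\Re f(A)\|\le \sec^{2}\alpha\;\|f(\Re A)\|.
\]

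Second, I would reduce $\|f(\Re A)\|$ to $f(\|\Re A\|)$. Since $f\in\mathfrak{m}$ is operator monotone on $(0,\infty)$, it is in particular pointwise monotone increasing there, and the spectrum of $\Re A$ lies in $(0,\infty)$. By the spectral theorem applied to the positive matrix $\Re A$,
\[
\|f(\Re A)\|=\max_{\lambda\in\sigma(\Re A)} f(\lambda)=f\bigl(\max_{\lambda\in\sigma(\Re A)}\lambda\bigr)=f(\|\Re A\|).
\]
Substituting this into both outer terms of the previous display gives exactly
\[
f(\|\Re A\|)\le \|\Re f(A)\|\le \sec^{2}\alpha\;f(\|\Re A\|),
\]
which is what we want.

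There is no genuine obstacle here: the work was already done in Lemma \ref{lemma_f_real_sec_f}, and the present statement is simply its scalar shadow. The only point that deserves a line of justification is the passage $\|f(\Re A)\|=f(\|\Re A\|)$, which relies on the pointwise monotonicity of $f$ together with $\Re A$ being positive definite; this is why the lemma is formulated for accretive (equivalently, here sectorial) $A$ rather than for general matrices, where $\Re A$ could fail to be positive and the spectral calculation would break down.
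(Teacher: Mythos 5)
Your argument is correct. The paper itself states this lemma without proof (it is quoted from \cite{bedr}), so there is no in-paper proof to compare against; your derivation — taking operator norms across the matrix inequality of Lemma \ref{lemma_f_real_sec_f} via Weyl monotonicity on positive matrices, and then identifying $\|f(\Re A)\|$ with $f(\|\Re A\|)$ by the spectral theorem and the pointwise monotonicity of $f$ — is the natural route and is sound as written.
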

\begin{lemma}\cite{Choi}\label{negtive_power_of_ real} Let $A\in\mathcal{S}_\alpha$ and $t\in[-1,0]$. Then
\begin{align}
\Re A^t\leq \Re^t A \leq \cos^{2t}\alpha\;\Re A^t
\end{align}
\end{lemma}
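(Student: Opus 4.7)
The plan is to handle the boundary case $t=-1$ by a direct calculation, and then to push from $t=-1$ to any $t\in(-1,0)$ by combining the integral representation of fractional powers with Lemma~\ref{lemma_f_real_sec_f} applied to $A^{-1}$. The case $t=0$ is immediate, since both expressions reduce to $I$.

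For the base case $t=-1$, I would write $A = X^{1/2}(I+iZ)X^{1/2}$ with $X=\Re A$ and $Z = X^{-1/2}(\Im A)X^{-1/2}$; the sectorial hypothesis $A\in\mathcal{S}_\alpha$ is then equivalent to the Hermitian operator bound $\|Z\|\le\tan\alpha$. Since $(I+iZ)(I-iZ)=I+Z^2$, one finds $\Re A^{-1} = X^{-1/2}(I+Z^2)^{-1}X^{-1/2}$, and the sandwich $\cos^2\alpha\,I \le (I+Z^2)^{-1}\le I$, which follows from $0\le Z^2\le\tan^2\alpha\,I$, yields $\Re A^{-1}\le(\Re A)^{-1}\le\sec^2\alpha\,\Re A^{-1}$ after conjugation by $X^{-1/2}$.

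For the upper bound $\Re A^t\le(\Re A)^t$ with $t\in(-1,0)$ I would use the integral representation
\[
A^t = \frac{\sin(-t\pi)}{\pi}\int_0^\infty \lambda^t(\lambda I+A)^{-1}\,d\lambda,
\]
valid since the spectrum of $A$ avoids $(-\infty,0]$. Because $\lambda I+A\in\mathcal{S}_\alpha$ for every $\lambda\ge 0$, the base case applied to $\lambda I+A$ gives $\Re(\lambda I+A)^{-1}\le(\lambda I+\Re A)^{-1}$. Taking real parts in the integral and recognizing the same scalar representation applied to the positive operator $\Re A$ on the right then produces $\Re A^t \le (\Re A)^t$.

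For the sharper lower bound $(\Re A)^t\le\cos^{2t}\alpha\,\Re A^t$ I would apply Lemma~\ref{lemma_f_real_sec_f} to $A^{-1}\in\mathcal{S}_\alpha$ with $f(x)=x^{-t}\in\mathfrak{m}$, which is operator monotone by L\"owner--Heinz since $-t\in[0,1]$, to obtain $(\Re A^{-1})^{-t}\le\Re((A^{-1})^{-t}) = \Re A^t$. The base case supplies $\cos^2\alpha\,(\Re A)^{-1}\le\Re A^{-1}$; applying the operator monotone map $x\mapsto x^{-t}$, together with its positive scalar homogeneity, transports this inequality to $\cos^{-2t}\alpha\,(\Re A)^t\le(\Re A^{-1})^{-t}$. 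Chaining the two gives $\cos^{-2t}\alpha\,(\Re A)^t\le\Re A^t$, which is the required bound. The main obstacle I anticipate is obtaining the $t$-dependent constant $\cos^{2t}\alpha$: a direct integral argument in this direction only produces the weaker uniform constant $\cos^2\alpha$, so the detour through Lemma~\ref{lemma_f_real_sec_f} and the operator monotonicity of $x^{-t}$ is what makes the bound sharp.
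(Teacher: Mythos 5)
Your argument is correct. Note first that the paper offers no proof of this lemma at all: it is imported verbatim from the cited reference of Choi, Tam and Zhang, so there is no in-paper argument to compare against. Your reconstruction is a legitimate self-contained derivation, and each step checks out: the decomposition $A=X^{1/2}(I+iZ)X^{1/2}$ with $\|Z\|\le\tan\alpha$ correctly encodes membership in $\mathcal{S}_\alpha$, the computation $\Re A^{-1}=X^{-1/2}(I+Z^2)^{-1}X^{-1/2}$ together with $I\le I+Z^2\le\sec^2\alpha\,I$ gives the $t=-1$ case, the Balakrishnan integral $A^t=\frac{\sin(-t\pi)}{\pi}\int_0^\infty\lambda^t(\lambda I+A)^{-1}\,d\lambda$ with $\lambda I+A\in\mathcal{S}_\alpha$ yields $\Re A^t\le(\Re A)^t$, and the detour through $A^{-1}\in\mathcal{S}_\alpha$, Lemma~\ref{lemma_f_real_sec_f} with $f(x)=x^{-t}$, and L\"owner--Heinz applied to $\cos^2\alpha\,(\Re A)^{-1}\le\Re A^{-1}$ correctly produces the sharp constant $\cos^{2t}\alpha$; your observation that the naive integral bound only gives the uniform constant $\cos^2\alpha$ is also accurate. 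Two small points deserve an explicit sentence each: you silently use that the principal powers satisfy $(A^{-1})^{-t}=A^{t}$ for sectorial $A$, and that the functional calculus in Lemma~\ref{lemma_f_real_sec_f} (defined via the integral representation \eqref{eq_def_f(A)}) agrees with the principal power for $f(x)=x^{-t}$; both are true for matrices with spectrum in the open right half-plane, but they are what glue the two halves of your argument together.
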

A reverse of Lemma \ref{negtive_power_of_ real} is as follows.
\begin{lemma}\cite{Choi}\label{positive_power_of_ real} Let $A\in\mathcal{S}_\alpha $ and $t\in[0,1]$. Then
\begin{align}
\cos^{2t}\alpha\;\Re A^t\leq \Re^t A \leq \Re A^t
\end{align}
\end{lemma}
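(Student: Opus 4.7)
My plan is to treat the two inequalities separately, as they require qualitatively different arguments. The upper bound $(\Re A)^t\leq \Re(A^t)$ is the easy half: for $t\in[0,1]$ the function $f(x)=x^t$ is operator monotone on $(0,\infty)$ by L\"owner-Heinz, and since $f(1)=1$ one has $f\in\mathfrak{m}$. The left-hand inequality of Lemma \ref{lemma_f_real_sec_f} applied to this $f$ then yields $(\Re A)^t=f(\Re A)\leq \Re f(A)=\Re(A^t)$ directly.

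The lower bound $\cos^{2t}\alpha\,\Re(A^t)\leq (\Re A)^t$ is strictly sharper than what Lemma \ref{lemma_f_real_sec_f} alone delivers (which gives only the weaker $\cos^{2}\alpha\,\Re(A^t)\leq (\Re A)^t$), so a finer route is needed. I would apply Lemma \ref{negtive_power_of_ real} twice, pivoting through the matrix $(\Re(A^{-t}))^{-1}$. First, applying Lemma \ref{negtive_power_of_ real} to $A$ with its parameter taken as $-t\in[-1,0]$ yields in particular $(\Re A)^{-t}\leq \cos^{-2t}\alpha\,\Re(A^{-t})$; inverting (legitimate since both sides are positive definite) produces $(\Re(A^{-t}))^{-1}\leq \cos^{-2t}\alpha\,(\Re A)^t$. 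Second, once one knows that $A^t$ is itself sectorial with sectorial index at most $\alpha$, applying Lemma \ref{negtive_power_of_ real} to $A^t$ with parameter $-1$ gives $\Re((A^t)^{-1})\leq (\Re(A^t))^{-1}$; using $(A^t)^{-1}=A^{-t}$ and inverting once more yields $\Re(A^t)\leq (\Re(A^{-t}))^{-1}$. Chaining the two estimates through the common middle term,
\begin{equation*}
\Re(A^t)\leq (\Re(A^{-t}))^{-1}\leq \cos^{-2t}\alpha\,(\Re A)^t,
\end{equation*}
and rearranging delivers precisely the required lower bound.

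The main obstacle in this plan is justifying the sectoriality of $A^t$ that is needed in the second application of Lemma \ref{negtive_power_of_ real}. The cleanest route I see is through the integral representation $A^t=\frac{\sin(\pi t)}{\pi}\int_0^\infty s^{t-1}A(sI+A)^{-1}\,ds$: the substitution $y=(sI+A)^{-1}x$ gives $\langle A(sI+A)^{-1}x,x\rangle = s\langle Ay,y\rangle + \|Ay\|^2$, whose imaginary-to-real ratio is bounded by $\tan\alpha$ because $\langle Ay,y\rangle\in S_\alpha$. Hence each $A(sI+A)^{-1}$ lies in $\mathcal{S}_\alpha$, and because $\mathcal{S}_\alpha$ is closed under positive linear combinations and limits, the integral inherits the same sectorial bound. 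With that sectoriality secured, the rest of the argument is pure arithmetic with positive matrix inverses.
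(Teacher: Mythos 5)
The paper offers no proof of this lemma at all: it is imported verbatim from \cite{Choi}. Your argument is therefore being compared against a citation rather than a proof, and on its own merits it is correct and self-contained relative to the paper's earlier material. The upper bound via Lemma \ref{lemma_f_real_sec_f} with $f(x)=x^t$ is exactly right. For the lower bound, your two applications of Lemma \ref{negtive_power_of_ real} --- first to $A$ with exponent $-t\in[-1,0]$, then to $A^t$ with exponent $-1$, chained through the pivot $(\Re(A^{-t}))^{-1}$ --- do yield $\cos^{2t}\alpha\,\Re(A^t)\le(\Re A)^t$ after the two inversions, and the inversions are legitimate since every matrix inverted is positive definite (all the powers involved are accretive). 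Two small economies are available: the sectoriality of $A^t$ that you establish by hand from the integral representation is already in the paper as Lemma \ref{S_alphat} (with the sharper index $t\alpha$), and in fact the step you need from the second application, namely $\Re(B^{-1})\le(\Re B)^{-1}$ for $B=A^t$, requires only accretivity of $B$, not a sectorial index. The only implicit assumptions are that the paper's definition \eqref{eq_def_f(A)} of $A^t$ agrees with the principal power appearing in your integral representation and that $(A^t)^{-1}=A^{-t}$; both are standard for sectorial matrices and are used elsewhere in the paper without comment, so they are acceptable here.
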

It is well known that for any matrix $A\in\mathcal{M}_n$, $|||\Re A|||\leq |||A|||$, for any unitarily invariant norm $\|\cdot\|$ on $\mathcal{M}_n$. The following lemma presents a reversed version of this inequality for sectorial matrices.
\begin{lemma}\cite{Zhang}\label{norm}
Let $ A \in \mathcal{S}_{\alpha} $ and let $ \parallel.\parallel $ be any unitarily invariant norm on $\mathcal{M}_n$. Then
\begin{center}
$ \cos\alpha\; ||| A||| \leq\
 ||| \Re(A)||| \leq |||A|||.$
\end{center}
\end{lemma}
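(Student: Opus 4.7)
My plan is to handle the two inequalities separately. The upper bound $|||\Re A||| \leq |||A|||$ is the easy half: since $\Re A = (A+A^{*})/2$, the triangle inequality and the identity $|||A^{*}|||=|||A|||$ for unitarily invariant norms immediately give $|||\Re A||| \leq \tfrac{1}{2}(|||A|||+|||A^{*}|||)=|||A|||$. This does not use the sectorial hypothesis.

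For the lower bound $\cos\alpha\,|||A|||\leq |||\Re A|||$, I would begin by assuming temporarily that $\Re A$ is invertible (the general sectorial case follows by perturbing $A$ to $A+\varepsilon I\in\mathcal{S}_{\alpha}$ and letting $\varepsilon\to 0^{+}$, using continuity of unitarily invariant norms). Set
\[
X=(\Re A)^{-1/2}(\Im A)(\Re A)^{-1/2},
\]
so that $X$ is Hermitian and
\[
A=(\Re A)^{1/2}\bigl(I+iX\bigr)(\Re A)^{1/2}.
\]
The key reduction of the sectorial hypothesis is the bound $\|X\|\leq \tan\alpha$: for any unit vector $y$, setting $x=(\Re A)^{1/2}y$ and using that $W(A)\subset S_{\alpha}$ translates to $|\langle (\Im A)x,x\rangle|\leq \tan\alpha\,\langle(\Re A)x,x\rangle$, one gets $|\langle Xy,y\rangle|\leq \tan\alpha$, and since $X$ is self-adjoint, $\|X\|=w(X)\leq \tan\alpha$.

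Next, because $X=X^{*}$, the identity $(I+iX)^{*}(I+iX)=I+X^{2}$ yields $\|I+iX\|^{2}=\|I+X^{2}\|=1+\|X\|^{2}\leq \sec^{2}\alpha$, so $\|I+iX\|\leq \sec\alpha$. Now I would use the standard fact $\sigma_j(BC)=\sigma_j(CB)$ for any compatible square matrices to write
\[
\sigma_j(A)=\sigma_j\!\bigl((\Re A)^{1/2}(I+iX)(\Re A)^{1/2}\bigr)=\sigma_j\!\bigl((I+iX)\,\Re A\bigr)\leq \|I+iX\|\,\sigma_j(\Re A)\leq \sec\alpha\,\lambda_j(\Re A),
\]
where the last step uses $\Re A\geq 0$, so its singular values equal its eigenvalues. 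Since any unitarily invariant norm is a symmetric gauge function of singular values, this pointwise domination of singular values gives $|||A|||\leq \sec\alpha\,|||\Re A|||$, which rearranges to the desired $\cos\alpha\,|||A|||\leq |||\Re A|||$.

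The main obstacle I anticipate is justifying $\|X\|\leq \tan\alpha$ cleanly and, more technically, handling the case when $\Re A$ is only positive semidefinite rather than strictly positive so that the inverse square root in the definition of $X$ is not defined; the $\varepsilon$-regularization argument is routine but must be presented carefully because one must verify that $A+\varepsilon I$ remains in $\mathcal{S}_\alpha$ (which it does, since $W(A+\varepsilon I)=W(A)+\varepsilon\subset S_{\alpha}$) and that both sides converge in norm as $\varepsilon\to 0$.
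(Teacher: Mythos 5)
This lemma is quoted in the paper from the reference of Zhang with no proof supplied, so there is nothing internal to compare against; for what it is worth, the decomposition $A=(\Re A)^{1/2}(I+iX)(\Re A)^{1/2}$ with $X=X^{*}$ and $\|X\|\leq\tan\alpha$ is exactly the device on which that reference is built. Your upper bound, the estimate $\|I+iX\|\leq\sec\alpha$, and the $\varepsilon$-regularization are all sound (modulo a small slip: you want $x=(\Re A)^{-1/2}y$, not $(\Re A)^{1/2}y$, so that $\langle Xy,y\rangle=\langle(\Im A)x,x\rangle$ while $\langle(\Re A)x,x\rangle=1$).

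The genuine gap is the last step. Singular values are \emph{not} invariant under cyclic permutation: $\sigma_j(BC)=\sigma_j(CB)$ is false in general --- take $B=\left(\begin{smallmatrix}1&0\\0&0\end{smallmatrix}\right)$ and $C=\left(\begin{smallmatrix}0&0\\1&0\end{smallmatrix}\right)$, so that $BC=0$ while $CB=C\neq0$; only the eigenvalues of $BC$ and $CB$ coincide. The pointwise domination $\sigma_j(S^{*}TS)\leq\|T\|\,\lambda_j(S^{*}S)$ that you deduce from it is likewise false for general $T$: with $S=\mathrm{diag}(1,\varepsilon)$ and $T=\left(\begin{smallmatrix}0&1\\1&0\end{smallmatrix}\right)$ one gets $\sigma_2(S^{*}TS)=\varepsilon>\varepsilon^{2}=\|T\|\,\lambda_2(S^{*}S)$. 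The inequality $\sigma_j(A)\leq\sec\alpha\,\lambda_j(\Re A)$ you are aiming at is in fact true for sectorial $A$ --- it is the Drury--Lin singular value inequality, reference \cite{drury2} of this paper --- but it is a genuinely harder statement and does not follow from your argument. For the norm inequality you only need weak majorization, and the repair is cheap: by the Ky Fan maximum principle, for orthonormal $k$-tuples $\{u_j\}$, $\{v_j\}$,
\begin{align*}
\sum_{j=1}^{k}\sigma_j(S^{*}TS)=\max\sum_{j=1}^{k}\Re\langle TSv_j,Su_j\rangle
\leq\|T\|\max\sum_{j=1}^{k}\tfrac{1}{2}\left(\|Sv_j\|^{2}+\|Su_j\|^{2}\right)
\leq\|T\|\sum_{j=1}^{k}\lambda_j(S^{*}S),
\end{align*}
whence $|||S^{*}TS|||\leq\|T\|\,|||S^{*}S|||$ by Fan dominance; applying this with $S=(\Re A)^{1/2}$ and $T=I+iX$ yields $|||A|||\leq\sec\alpha\,|||\Re A|||$. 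Either insert this majorization argument or invoke Drury--Lin for the pointwise bound; as written, your final displayed chain is not justified.
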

\begin{lemma} \cite{kitt1}\label{nume_real<nemu A} Let $A\in\mathcal{M}_n$. Then
\begin{align}
w(\Re A)\leq w(A).
\end{align}

\end{lemma}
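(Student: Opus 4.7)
The plan is to reduce this to a pointwise estimate on the quadratic form, since both sides are defined as suprema of $|\langle Tx, x\rangle|$ over unit vectors $x$. The key observation is that $\Re A = \tfrac{1}{2}(A + A^*)$ is Hermitian, so $\langle \Re A\,x, x\rangle$ is automatically a real number; more precisely, a direct computation using $\langle A^*x, x\rangle = \overline{\langle Ax, x\rangle}$ gives the identity
\begin{equation*}
\langle \Re A\,x, x\rangle = \Re\langle Ax, x\rangle
\end{equation*}
for every $x \in \mathbb{C}^n$.

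From there, the estimate is immediate: for any unit vector $x$,
\begin{equation*}
|\langle \Re A\,x, x\rangle| = |\Re\langle Ax, x\rangle| \leq |\langle Ax, x\rangle|,
\end{equation*}
using the elementary bound $|\Re z| \leq |z|$ for $z \in \mathbb{C}$. Taking the supremum over unit vectors $x$ on both sides yields $w(\Re A) \leq w(A)$. There is no real obstacle here — the statement is essentially a one-line consequence of the definition of the numerical radius together with the fact that extracting the real part of a complex number cannot increase its modulus; the only thing worth being careful about is the initial identification $\langle \Re A\,x, x\rangle = \Re\langle Ax, x\rangle$, which makes the modulus on the left an honest absolute value of a real number rather than of a complex scalar.
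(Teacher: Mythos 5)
Your proof is correct: the identity $\langle \Re A\,x,x\rangle = \Re\langle Ax,x\rangle$ together with $|\Re z|\leq |z|$ immediately gives the inequality after taking suprema over unit vectors. The paper itself only cites this lemma from the literature without proof, and your argument is the standard elementary one, so there is nothing to add.
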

\begin{lemma}\cite{YMao} \label{heinz_norm_bound} Let $ A, B \in \mathcal{S}_{\alpha} $. Then for $t\in (0,1)$, 
\begin{align}
\cos^3\alpha\;|||A\sharp B|||\leq |||\mathcal{H}_t(A,B)|||\leq\dfrac{\sec^3\alpha}{2}\;|||A+B|||.
\end{align}
\end{lemma}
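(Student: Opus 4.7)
The plan is to reduce both bounds to the classical positive-matrix Heinz--arithmetic--geometric chain
\[
X\sharp Y\le \mathcal{H}_t(X,Y)\le \frac{X+Y}{2}\qquad (X,Y>0),
\]
by passing between a sectorial matrix and its real part. The two main correction factors come from Lemma \ref{norm} (each use of which costs a $\sec\alpha$) and Lemma \ref{lemma_real_a_sigma_b_less} (each use of which costs a $\sec^2\alpha$), and the budget of $\sec^3\alpha$ in both directions of the lemma is exactly one of each.

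For the upper bound I would first observe that $\mathcal{H}_t(A,B)\in\mathcal{S}_{\alpha}$, since the sector $S_\alpha$ is closed under sums and each $A\sharp_s B$ lies in $\mathcal{S}_{\alpha}$. Then Lemma \ref{norm} gives $|||\mathcal{H}_t(A,B)|||\le \sec\alpha\,|||\Re\mathcal{H}_t(A,B)|||$. Applying Lemma \ref{lemma_real_a_sigma_b_less} separately to $\Re(A\sharp_t B)$ and $\Re(A\sharp_{1-t} B)$ and averaging yields $\Re\mathcal{H}_t(A,B)\le \sec^2\alpha\,\mathcal{H}_t(\Re A,\Re B)$, and the classical arithmetic--geometric mean inequality for positive matrices gives $\mathcal{H}_t(\Re A,\Re B)\le \tfrac12\Re(A+B)$. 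Taking a unitarily invariant norm of this positive operator inequality and using $|||\Re(A+B)|||\le|||A+B|||$ assembles the factor $\frac{\sec^3\alpha}{2}$.

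For the lower bound the same steps are run in a different order. Lemma \ref{norm} applied to $A\sharp B\in\mathcal{S}_\alpha$ gives $|||A\sharp B|||\le \sec\alpha\,|||\Re(A\sharp B)|||$; Lemma \ref{lemma_real_a_sigma_b_less} with $f(x)=\sqrt{x}$ contributes the next $\sec^2\alpha$ to pass to $|||(\Re A)\sharp(\Re B)|||$; and the classical positive-matrix inequality $(\Re A)\sharp(\Re B)\le \mathcal{H}_t(\Re A,\Re B)$ bounds this by $|||\mathcal{H}_t(\Re A,\Re B)|||$.

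The one delicate step, and the genuine obstacle, is closing the loop from $|||\mathcal{H}_t(\Re A,\Re B)|||$ back up to $|||\mathcal{H}_t(A,B)|||$. For this I would invoke the companion to Lemma \ref{lemma_real_a_sigma_b_less}, namely the lower bound $(\Re A)\sigma_f(\Re B)\le \Re(A\sigma_f B)$ for accretive $A,B$ (the two-variable analogue of the left inequality in Lemma \ref{lemma_f_real_sec_f}, available from \cite{bedr}). Applied with $f(x)=x^t$ and $f(x)=x^{1-t}$ and averaged, this yields the positive-matrix inequality $\mathcal{H}_t(\Re A,\Re B)\le \Re\mathcal{H}_t(A,B)$, so that $|||\mathcal{H}_t(\Re A,\Re B)|||\le|||\Re\mathcal{H}_t(A,B)|||\le|||\mathcal{H}_t(A,B)|||$, the last step being the routine $|||\Re X|||\le|||X|||$. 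Multiplying through by $\cos^3\alpha$ delivers the claimed lower bound.
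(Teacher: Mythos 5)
The paper does not prove this lemma at all --- it is imported verbatim from \cite{YMao} as a known result --- so there is no internal proof to compare against; what can be judged is whether your reconstruction is sound, and it is. Both halves are correctly budgeted: the upper bound spends one $\sec\alpha$ on Lemma \ref{norm} (legitimate, since $\mathcal{H}_t(A,B)\in\mathcal{S}_\alpha$ by Proposition \ref{prop_asigmab_s} and closure of the sector under sums), one $\sec^2\alpha$ on Lemma \ref{lemma_real_a_sigma_b_less} applied to $\sharp_t$ and $\sharp_{1-t}$, and then the classical positive-matrix inequality $\mathcal{H}_t(X,Y)\le\frac{X+Y}{2}$ together with Weyl monotonicity of unitarily invariant norms and $|||\Re(A+B)|||\le|||A+B|||$. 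The lower bound is likewise correct, and you have put your finger on the one genuinely nontrivial ingredient: the companion inequality $(\Re A)\,\sigma_f\,(\Re B)\le\Re(A\,\sigma_f\,B)$, which is \emph{not} quoted anywhere in this paper (only the reverse direction with $\sec^2\alpha$ appears, as Lemma \ref{lemma_real_a_sigma_b_less}). That inequality is true and is proved in \cite{bedr}; via the integral representation \eqref{eq_def_sigma} it reduces to $\Re(A!_sB)\ge(\Re A)!_s(\Re B)$, which follows from the variational characterization of the parallel sum. Without it one could still close the loop using Lemma \ref{lemma_real_a_sigma_b_less} in reverse, but only at the cost of an extra $\cos^2\alpha$, so importing the companion bound is necessary to hit the stated constant $\cos^3\alpha$. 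If you were writing this into the paper you should state that companion inequality as an explicit lemma with its citation, since the preliminaries as given do not contain it.
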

\begin{lemma}\cite{drury1}\label{S_alphat} Let $ A \in \mathcal{S}_{\alpha} $ and $t\in (0,1)$. Then $ W(A^t)\in S_{t\alpha}$.

Also note that $ W(A^{-t})\in S_{t\alpha}$, by the result indicates that if $W(A)\in S_{\alpha}$ then $W(A^{-1})\in S_{\alpha}$ .   
\end{lemma}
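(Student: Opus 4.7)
My plan is to prove this via the integral representation of fractional powers. For $\lambda$ in the slit plane $\mathbb{C}\setminus(-\infty,0]$ and $t\in(0,1)$, the identity
$$\lambda^t=\frac{\sin\pi t}{\pi}\int_0^\infty\frac{s^{t-1}\lambda}{s+\lambda}\,ds$$
follows from the Beta-function evaluation $\int_0^\infty u^{t-1}/(u+1)\,du=\pi/\sin\pi t$ after the substitution $u=s/\lambda$. Because $\sigma(A)\subset\overline{W(A)}\subset\overline{S_\alpha}$, the holomorphic functional calculus lifts this to
$$A^t=\frac{\sin\pi t}{\pi}\int_0^\infty s^{t-1}(sI+A)^{-1}A\,ds.$$

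Pairing with a unit vector $x$, I would then deform the integration contour from the positive real axis to a ray $\{re^{i\psi}:r>0\}$. Since $-\sigma(A)\subset-\overline{S_\alpha}$, the resolvent $(sI+A)^{-1}$ is holomorphic in $\{|\arg s|<\pi-\alpha\}$, so the deformation is legal for any $\psi\in(-(\pi-\alpha),\pi-\alpha)$, with the contour shift justified by the $O(s^{t-2})$ decay at infinity and the $O(s^{t-1})$ behavior at the origin. Under this shift one obtains
$$\langle A^tx,x\rangle=\frac{\sin\pi t}{\pi}\,e^{it\psi}\int_0^\infty r^{t-1}\langle(re^{i\psi}I+A)^{-1}Ax,x\rangle\,dr,$$
so the argument of $\langle A^tx,x\rangle$ decomposes into the explicit rotational phase $t\psi$ plus the argument of the inner integral.

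The heart of the argument — and the main obstacle — is an angle-counting step: for a suitable $\psi$, the image of the scalar map $\lambda\mapsto\lambda/(re^{i\psi}+\lambda)$ on $S_\alpha$ lies in a half-plane whose normal direction depends on $\psi$, and combining this sectorial information on the integrand with the explicit prefactor $e^{it\psi}$ forces $|\arg\langle A^tx,x\rangle|\leq t\alpha$. This balancing is delicate: the naive choice $\psi=0$ gives only the weaker bound $|\arg\langle A^tx,x\rangle|\leq\alpha$, and the improvement to the sharp exponent $t\alpha$ arises precisely from the interplay between the rotational phase and the $\psi$-dependent sector containing the integrand, optimized by letting $\psi$ approach $\pm(\pi-\alpha)$ through a limiting (principal-value) argument.

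For the second assertion, I would first observe that $\mathcal{S}_\alpha$ is closed under inversion: given a unit vector $x$, set $y=A^{-1}x/\|A^{-1}x\|$; then $\langle Ay,y\rangle=\overline{\langle A^{-1}x,x\rangle}/\|A^{-1}x\|^2$, so $\langle A^{-1}x,x\rangle$ and $\langle Ay,y\rangle$ have conjugate arguments and positive real parts (using that $A$ is accretive). Since $S_\alpha$ is symmetric about the real axis, $W(A^{-1})\subset S_\alpha$. Applying the first part of the lemma to $A^{-1}$ then yields $W(A^{-t})=W((A^{-1})^t)\subset S_{t\alpha}$, completing the proof.
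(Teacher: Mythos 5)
First, note that the paper itself offers no proof of this lemma: it is quoted from \cite{drury1}, so there is no internal argument to compare yours against; your proposal has to stand on its own. Your treatment of the second assertion does stand: the identity $\langle A^{-1}x,x\rangle=\|A^{-1}x\|^{2}\,\overline{\langle Ay,y\rangle}$ with $y=A^{-1}x/\|A^{-1}x\|$ correctly shows that $W(A^{-1})$ lies in the cone generated by the conjugate of $W(A)$, hence in $S_\alpha$, and then $A^{-t}=(A^{-1})^{t}$ for principal powers reduces the claim to the first part. The set-up for the first part is also sound: the Beta-integral representation, the holomorphy and decay estimates justifying rotation of the contour to a ray of angle $\psi$ with $|\psi|<\pi-\alpha$, and the extraction of the phase $e^{it\psi}$ are all correct.

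The gap is that the first assertion is never actually proved. You yourself label the "angle-counting step" as the heart of the argument and the main obstacle, and then assert that it "forces" $|\arg\langle A^{t}x,x\rangle|\le t\alpha$ without deriving a single inequality; a proof cannot defer its central estimate. Worse, the optimization you indicate --- letting $\psi$ approach $\pm(\pi-\alpha)$ --- goes in the wrong direction. If one carries out the bookkeeping via $(re^{i\psi}I+A)^{-1}A=(I+re^{i\psi}A^{-1})^{-1}$, the containment $W(I+re^{i\psi}A^{-1})\subset 1+e^{i\psi}S_\alpha$, and the cone-inversion fact you proved for the second part, then for $\psi\ge\alpha$ the integrand's values lie in the cone $\{\arg z\in[-(\psi+\alpha),0]\}$ and the prefactor gives only $\arg\langle A^{t}x,x\rangle\le t\psi$; this bound is minimized at $\psi=\alpha$ (yielding exactly $t\alpha$) and degrades to $t(\pi-\alpha)>t\alpha$ as $\psi\to\pi-\alpha$. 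The correct choices are $\psi=\alpha$ and $\psi=-\alpha$: for $\psi=\alpha$ the cone $1+e^{i\alpha}S_\alpha$ has aperture $2\alpha$ with one edge on the positive real axis, so the integral has argument in $[-2\alpha,0]$ and the phase $e^{it\alpha}$ gives the one-sided bound $\arg\langle A^{t}x,x\rangle\le t\alpha$; the symmetric choice $\psi=-\alpha$ gives $\arg\langle A^{t}x,x\rangle\ge -t\alpha$, and the two together give $W(A^{t})\subset S_{t\alpha}$. No limiting or principal-value argument is needed. As written, your proposal identifies a viable strategy but neither executes nor correctly locates its decisive step, so it does not constitute a proof; either supply the estimate above in full or simply cite \cite{drury1} as the authors do.
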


\begin{lemma} \cite{abo_omar}\label{abu_omar_kittaneh} Let $ A ,B\in \mathcal{M}_{n} $ be positive matrices. Then 

\begin{align}
w\left[ \begin{pmatrix}
0&A\\
B&0
\end{pmatrix} 
\right] = \dfrac{1}{2}\|A+B\|.
\end{align}
\end{lemma}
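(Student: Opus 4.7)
\medskip
\noindent\textbf{Proof sketch.} Write $T=\begin{pmatrix}0&A\\B&0\end{pmatrix}$. The plan is to establish the identity as matching lower and upper bounds.

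For the lower bound, since $A+B$ is positive we have $\|A+B\|=\max_{\|z\|=1}\langle(A+B)z,z\rangle$, and this maximum is attained at some unit $z_{0}\in\mathbb{C}^{n}$. Inserting the test vector $(x,y)=(z_{0}/\sqrt{2},\,z_{0}/\sqrt{2})$, which satisfies $\|x\|^{2}+\|y\|^{2}=1$, a direct computation gives
$$\left\langle T\begin{pmatrix}x\\y\end{pmatrix},\begin{pmatrix}x\\y\end{pmatrix}\right\rangle=\langle Ay,x\rangle+\langle Bx,y\rangle=\tfrac{1}{2}\langle(A+B)z_{0},z_{0}\rangle=\tfrac{1}{2}\|A+B\|,$$
a nonnegative real number whose modulus forces $w(T)\ge\tfrac{1}{2}\|A+B\|$.

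For the matching upper bound I would first perform a phase reduction. The substitution $(x,y)\mapsto(e^{i\alpha}x,e^{i\beta}y)$ preserves $\|x\|^{2}+\|y\|^{2}=1$ while replacing the expression $\langle Ay,x\rangle+\langle Bx,y\rangle$ by $e^{i(\beta-\alpha)}\langle Ay,x\rangle+e^{-i(\beta-\alpha)}\langle Bx,y\rangle$; choosing $\phi=\beta-\alpha$ equal to half the difference of the two arguments aligns the phases of the summands, yielding
$$w(T)=\sup_{\|x\|^{2}+\|y\|^{2}=1}\bigl(|\langle Ay,x\rangle|+|\langle Bx,y\rangle|\bigr).$$
Cauchy--Schwarz applied to the positive semi-inner product $(u,v)\mapsto\langle Au,v\rangle$ gives $|\langle Ay,x\rangle|\le\langle Ax,x\rangle^{1/2}\langle Ay,y\rangle^{1/2}$, and similarly for $B$. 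A further Cauchy--Schwarz, treating the resulting sum as a dot product in $\mathbb{R}^{2}$, yields
$$|\langle Ay,x\rangle|+|\langle Bx,y\rangle|\le\sqrt{\langle(A+B)x,x\rangle\,\langle(A+B)y,y\rangle}\le\|A+B\|\,\|x\|\|y\|\le\tfrac{1}{2}\|A+B\|,$$
the last step being AM--GM together with the normalization $\|x\|^{2}+\|y\|^{2}=1$. Combining the two bounds proves the claimed equality.

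The only step that requires care is the phase alignment: one must verify that rotating $x$ and $y$ by independent phases suffices to push $\langle Ay,x\rangle+\langle Bx,y\rangle$ to its largest possible modulus $|\langle Ay,x\rangle|+|\langle Bx,y\rangle|$. This reduces to the identity $|e^{i\phi}a+e^{-i\phi}b|\le|a|+|b|$ with equality at $\phi=\tfrac{1}{2}(\arg b-\arg a)$, a one-variable calculation. Everything else is a routine cascade of classical inequalities.
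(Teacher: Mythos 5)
Your argument is correct and self-contained. Note that the paper itself gives no proof of this lemma --- it is quoted from Abu-Omar and Kittaneh's article on numerical radii of $n\times n$ operator matrices --- so there is no in-paper argument to compare against; your two-sided bound is a perfectly valid elementary derivation. The lower bound via the test vector $(z_0/\sqrt{2},\,z_0/\sqrt{2})$ is exactly right (positivity of $A+B$ is what lets you realize $\|A+B\|$ as $\max_{\|z\|=1}\langle (A+B)z,z\rangle$), and the upper bound via the semi-inner-product Cauchy--Schwarz $|\langle Ay,x\rangle|\le\langle Ax,x\rangle^{1/2}\langle Ay,y\rangle^{1/2}$, the $\mathbb{R}^2$ Cauchy--Schwarz, and AM--GM is airtight. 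One small remark: the phase-alignment step is superfluous for the upper bound --- the plain triangle inequality $|\langle Ay,x\rangle+\langle Bx,y\rangle|\le|\langle Ay,x\rangle|+|\langle Bx,y\rangle|$ is all you need there, and positivity of $A$ and $B$ already makes the summands nonnegative at the extremal vector, so the only place the phase rotation would matter is in proving the intermediate \emph{equality} $w(T)=\sup\bigl(|\langle Ay,x\rangle|+|\langle Bx,y\rangle|\bigr)$, which the final result does not require.
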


The following two lemmas are well known.
\begin{lemma} \label{max_norm} Let $ A ,B\in \mathcal{M}_{n} $. Then 

\begin{align}
\left\| \begin{pmatrix}
0&A\\
B&0
\end{pmatrix} 
\right\| = \max(\|A\|,\|B\|).
\end{align}
\end{lemma}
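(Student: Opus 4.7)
The plan is to reduce the operator norm computation to a block-diagonal matrix via the identity $\|M\|^2=\|M^*M\|$. First, I would write $M=\begin{pmatrix}0&A\\B&0\end{pmatrix}$ and compute directly
\[
M^*M=\begin{pmatrix}0&B^*\\A^*&0\end{pmatrix}\begin{pmatrix}0&A\\B&0\end{pmatrix}=\begin{pmatrix}B^*B&0\\0&A^*A\end{pmatrix}.
\]
This collapses the off-diagonal structure and leaves a block-diagonal positive matrix, which is the key simplification.

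Next, I would invoke the standard fact that for a block-diagonal matrix $\mathrm{diag}(X,Y)$ one has $\|\mathrm{diag}(X,Y)\|=\max(\|X\|,\|Y\|)$; this follows immediately from partitioning a unit vector $(x,y)^{T}\in\mathbb{C}^n\oplus\mathbb{C}^n$ and using $\|Xx\|^2+\|Yy\|^2\le \max(\|X\|^2,\|Y\|^2)(\|x\|^2+\|y\|^2)$, with equality attained on a vector supported in the block achieving the maximum. Applied to the identity above, this yields $\|M^*M\|=\max(\|B^*B\|,\|A^*A\|)=\max(\|B\|^2,\|A\|^2)$.

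Finally, taking the square root via $\|M\|=\|M^*M\|^{1/2}$ gives $\|M\|=\max(\|A\|,\|B\|)$, as required. Since each step reduces to a textbook identity (the $C^*$-identity $\|M\|^2=\|M^*M\|$ and the norm of a block-diagonal matrix), there is no genuine obstacle; the only care needed is the straightforward block multiplication that produces the diagonal form.
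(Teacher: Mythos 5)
Your proof is correct: the computation $M^*M=\mathrm{diag}(B^*B,A^*A)$, the norm of a block-diagonal matrix, and the identity $\|M\|^2=\|M^*M\|$ together give exactly the stated equality. The paper itself offers no proof (it labels the lemma as well known), so there is no argument to compare against; your route is a standard and complete justification.
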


\begin{lemma} \label{lem_negative_power_norm} Let $ A\in \mathcal{M}_{n} $ be invertible . Then

\begin{align}
\|A\|^{-1}\leq \|A^{-1}\|.
\end{align}

\end{lemma}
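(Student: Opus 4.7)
The plan is to derive this inequality directly from the submultiplicativity of the operator norm together with the fact that $\|I\| = 1$. Since $A$ is invertible, we have the identity $A A^{-1} = I$, and applying the operator norm yields
\begin{equation*}
1 = \|I\| = \|A A^{-1}\| \leq \|A\|\,\|A^{-1}\|.
\end{equation*}
Since $A$ is invertible, $\|A\| > 0$, so dividing by $\|A\|$ gives the desired bound $\|A\|^{-1} \leq \|A^{-1}\|$.

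There is essentially no obstacle here, as the statement is a one-line consequence of submultiplicativity. The only minor point to note is that invertibility of $A$ guarantees $\|A\| \neq 0$, which justifies the division. No auxiliary lemma from the paper is needed, and the argument works verbatim for any submultiplicative norm on $\mathcal{M}_n$ satisfying $\|I\| = 1$, not just the operator norm.
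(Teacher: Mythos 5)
Your proof is correct: the paper states this lemma without proof as a well-known fact, and your argument via $1=\|I\|=\|AA^{-1}\|\leq\|A\|\,\|A^{-1}\|$ is exactly the standard one-line justification. The observation that invertibility guarantees $\|A\|>0$ properly justifies the division, so there is nothing to add.
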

\begin{lemma}\cite{kubo_ando}\label{monoto_mean} Let $A,B,C,D\in\mathcal{M}_n$ be positive. Then

$A\leq C$ and $B\leq D$ imply $A\sigma B\leq C\sigma D$.
\end{lemma}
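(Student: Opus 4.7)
The plan is to reduce the statement to the monotonicity of the weighted harmonic mean and then pass to the general operator mean via the integral representation \eqref{eq_def_sigma}. Since every $\sigma$ in the Kubo--Ando framework arising from some $f\in\mathfrak m$ admits the representation
$$A\sigma B=\int_0^1 (A!_sB)\,d\nu_f(s),$$
if I can prove $A!_sB\le C!_sD$ pointwise in $s\in[0,1]$, then integrating against the probability measure $\nu_f$ preserves the inequality and delivers $A\sigma B\le C\sigma D$.

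So the heart of the matter is to verify that for positive invertible $A,B,C,D$ with $A\le C$ and $B\le D$ one has $A!_sB\le C!_sD$. First I would invoke the standard fact that inversion reverses the Löwner order on positive invertible matrices: $A\le C\Rightarrow C^{-1}\le A^{-1}$ and similarly $D^{-1}\le B^{-1}$. Taking the convex combination with weights $(1-s)$ and $s$ preserves the inequality, giving
$$(1-s)C^{-1}+sD^{-1}\le (1-s)A^{-1}+sB^{-1}.$$
Applying inversion one more time reverses the order again and yields exactly
$$A!_sB=\bigl((1-s)A^{-1}+sB^{-1}\bigr)^{-1}\le\bigl((1-s)C^{-1}+sD^{-1}\bigr)^{-1}=C!_sD,$$
which is the required pointwise bound.

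With the pointwise inequality in hand, I would integrate both sides against the probability measure $\nu_f$ characterizing $\sigma_f$; since Löwner ordering is preserved under integration of continuous matrix-valued functions against a positive measure, the conclusion $A\sigma B\le C\sigma D$ follows.

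The only genuine obstacle is handling degeneracies: the proof above presumes invertibility so that $A^{-1}$ exists and inversion reverses the order. For positive semidefinite but singular matrices one should replace $A,B,C,D$ by $A+\varepsilon I,\,B+\varepsilon I,\,C+\varepsilon I,\,D+\varepsilon I$ (all invertible, and inequalities still hold), apply the invertible case, and then let $\varepsilon\downarrow 0$, using upper semicontinuity of $\sigma$ in the Kubo--Ando sense to pass to the limit. Since the hypothesis in the lemma is that all four matrices are positive, this $\varepsilon$-regularization step is essentially routine and does not alter the structure of the argument.
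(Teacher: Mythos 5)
Your proof is correct. The paper gives no argument for this lemma---it is simply quoted from Kubo and Ando---but your route is the standard one and is fully consistent with the paper's own framework: the representation \eqref{eq_def_sigma} reduces everything to monotonicity of the weighted harmonic mean, which you obtain correctly from the order-reversal of inversion on positive invertible matrices applied twice around a convex combination, and integration against the positive measure $\nu_f$ preserves the L\"owner order. One small remark: under this paper's convention ``positive'' means $W(A)\subset(0,\infty)$, i.e.\ positive definite, so all four matrices are already invertible and your $\varepsilon$-regularization step is not needed here, though it is harmless and would be the right move for the positive semidefinite version of the statement.
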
 
\section{Main results}
Now we are ready to present our results. We will present our results in three subsections. In the first subsection, we present inequalities for the numerical radii of accretive matrices that extend some well known inequalities for the numerical radius. However, in the second subsection, we present a new type of numerical radius inequalities that has never been tickled in the literature. The last subsection treats inequalities for the numerical radius and it's connection to operator means.

\subsection{Accretive versions of some known numerical radius inequalities}
First, we have the simple accretive version of \eqref{eq_num_oper}.
\begin{proposition} \label{prop_1}
Let $ A \in \mathcal{S}_{\alpha} $. Then 
\begin{align}
\cos\alpha\;\|A\|\leq w(A)\leq \|A\|
\end{align}
\end{proposition}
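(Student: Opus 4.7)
The upper bound $w(A) \le \|A\|$ is immediate from the standard inequality \eqref{eq_num_oper}, so the work lies entirely in establishing the lower bound $\cos\alpha\,\|A\| \le w(A)$.

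My plan is to chain together three facts that are already at hand in the preliminaries. First, since $A \in \mathcal{S}_\alpha$, Lemma \ref{norm} applied to the operator norm yields
\[
\cos\alpha\,\|A\| \;\le\; \|\Re A\|.
\]
Second, the sectoriality of $A$ forces $\Re A$ to be positive definite, and for a positive matrix the numerical radius coincides with the operator norm, giving $\|\Re A\| = w(\Re A)$. Third, Lemma \ref{nume_real<nemu A} provides $w(\Re A) \le w(A)$ for any matrix. Concatenating these three steps,
\[
\cos\alpha\,\|A\| \;\le\; \|\Re A\| \;=\; w(\Re A) \;\le\; w(A),
\]
which is exactly the desired lower bound.

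There is really no obstacle here, since each of the three ingredients is stated in the preliminary section; the only subtlety is to note explicitly that sectoriality implies positivity of $\Re A$, so that the identity $\|\Re A\| = w(\Re A)$ is legitimate. Combining the two bounds completes the proof.
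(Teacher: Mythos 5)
Your proposal is correct and follows essentially the same route as the paper: both use Lemma \ref{norm} to get $\cos\alpha\,\|A\|\le\|\Re A\|$, the identity $\|\Re A\|=w(\Re A)$ from positivity of $\Re A$, and Lemma \ref{nume_real<nemu A} to reach $w(A)$, with the upper bound coming from \eqref{eq_num_oper}. Nothing to add.
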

\begin{proof} 
Noting that $w(\Re A)=\|\Re A\|$, since $\Re A>0,$ Lemma \ref{norm} implies 
\begin{align*}
\cos\alpha\;\|A\|\leq \|\Re A\|=w(\Re A)\leq w(A)\leq\|A\|.
\end{align*}

\end{proof}

\begin{remark}
Notice that when $0<\alpha<\frac{\pi}{3},$ $\cos\alpha>\frac{1}{2}.$ This means that, for such $\alpha$, 
$$\frac{1}{2}\|A\|< \cos\alpha\; \|A\|\leq w(A)\leq \|A\|,$$ providing a considerable refinement of the left inequality in \eqref{eq_num_oper}.
\end{remark}
\begin{corollary} Let $ A \in \mathcal{S}_{\alpha} $. Then for $t\in (-1,1)$,
\begin{align}
\cos t\alpha\;\|A^t\|\leq w(A^t)\leq \|A^t\|.
\end{align}
\end{corollary}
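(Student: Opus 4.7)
The plan is to reduce the statement to a direct application of Proposition \ref{prop_1} after locating the sectorial index of $A^t$.

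First I would handle the case $t\in(0,1)$. By Lemma \ref{S_alphat}, if $A\in\mathcal{S}_{\alpha}$, then $A^t\in\mathcal{S}_{t\alpha}$. Since $0\leq t\alpha<\alpha<\frac{\pi}{2}$, Proposition \ref{prop_1} applies to $A^t$ with sectorial index $t\alpha$, yielding
\begin{equation*}
\cos(t\alpha)\,\|A^t\|\leq w(A^t)\leq \|A^t\|.
\end{equation*}

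Next I would handle $t\in(-1,0)$. Writing $s=-t\in(0,1)$, Lemma \ref{S_alphat} (together with the remark there that $W(A^{-1})\subset S_{\alpha}$ whenever $W(A)\subset S_{\alpha}$) gives $A^{t}=A^{-s}\in\mathcal{S}_{s\alpha}=\mathcal{S}_{|t|\alpha}$. Applying Proposition \ref{prop_1} to $A^t$ with sectorial index $|t|\alpha$, and using that cosine is even so $\cos(|t|\alpha)=\cos(t\alpha)$, yields the same inequality. The case $t=0$ is trivial since $A^0=I$.

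I do not anticipate any real obstacle: the entire argument is to recognize that fractional (positive or negative) powers shrink the sectorial index linearly, which is exactly the content of Lemma \ref{S_alphat}, and then to feed $A^t$ into Proposition \ref{prop_1}. The only point requiring a word of care is the negative-exponent case, where one has to combine both halves of Lemma \ref{S_alphat} and use the parity of cosine.
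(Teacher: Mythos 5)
Your proposal is correct and matches the paper's intended argument: the paper's own proof simply cites Proposition \ref{prop_1} together with Lemma \ref{S_alphat} (and Lemma \ref{norm}), which is exactly the reduction you carry out, including the use of the second half of Lemma \ref{S_alphat} for negative exponents. Your write-up is in fact more explicit than the paper's one-line proof, particularly in handling the sign of $t$ and the parity of cosine.
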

\begin{proof} Proposition \ref{prop_1}, Lemma \ref{norm} and Lemma \ref{S_alphat}, imply the desired result.
\end{proof} 
While $w(\Re A)\leq w(A)$ for any matrix $A$, a reversed version can be found via sectorial matrices, as follows.
\begin{corollary} Let $ A \in \mathcal{S}_{\alpha} $. Then
\begin{align}\label{inv_w(rA)}
w(A)\leq \sec\alpha\;w(\Re A).
\end{align}

\end{corollary}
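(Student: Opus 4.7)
The plan is to combine the two basic ingredients already isolated in the excerpt: the norm-version of the sectorial inequality (Lemma \ref{norm}) and the fact that the numerical radius of a positive matrix equals its operator norm. I would chain them as follows.

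First, because $A\in\mathcal{S}_\alpha$, Lemma \ref{norm} gives $\|A\|\leq \sec\alpha\,\|\Re A\|$. Second, the right-hand side of \eqref{eq_num_oper} gives $w(A)\leq \|A\|$. Third, since $A$ is accretive we have $\Re A>0$, so $w(\Re A)=\|\Re A\|$. Stringing these together yields
\begin{equation*}
w(A)\leq \|A\|\leq \sec\alpha\,\|\Re A\|=\sec\alpha\,w(\Re A),
\end{equation*}
which is \eqref{inv_w(rA)}.

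A slightly more direct, self-contained route (in case one prefers not to route through the operator norm) would be to observe that for every unit vector $x$ one has
\begin{equation*}
|\langle Ax,x\rangle|^{2}=\bigl(\Re\langle Ax,x\rangle\bigr)^{2}+\bigl(\Im\langle Ax,x\rangle\bigr)^{2}\leq (1+\tan^{2}\alpha)\bigl(\langle(\Re A)x,x\rangle\bigr)^{2},
\end{equation*}
so $|\langle Ax,x\rangle|\leq \sec\alpha\,\langle(\Re A)x,x\rangle$, and taking the supremum over unit $x$ gives \eqref{inv_w(rA)}. This reproves the statement without invoking Lemma \ref{norm}, but the three-line chain above is the shortest in the paper's own language.

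There is no real obstacle; the only point worth flagging is the implicit use of $w(\Re A)=\|\Re A\|$, which requires $\Re A$ to be positive (equivalently, $A$ accretive), a property built into the hypothesis $A\in\mathcal{S}_\alpha$. I would therefore present the proof as the three-line chain displayed above, citing Lemma \ref{norm} and \eqref{eq_num_oper} explicitly.
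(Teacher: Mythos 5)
Your three-line chain $w(A)\leq\|A\|\leq\sec\alpha\,\|\Re A\|=\sec\alpha\,w(\Re A)$ is exactly the paper's proof (the paper cites Proposition \ref{prop_1}, which is just Lemma \ref{norm} combined with $w(\Re A)=\|\Re A\|$), so your argument is correct and essentially identical. The self-contained pointwise estimate you sketch as an alternative is also valid and arguably more elementary, but it is not needed.
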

\begin{proof} Let $A\in \mathcal{S}_{\alpha}$. Then $w(\Re A)=\|\Re A\|$, since $\Re A>0.$ Proposition \ref{prop_1} implies
\begin{align*}
w(A)\leq \|A\|\leq \sec\alpha\;\|\Re A\|=\sec\alpha\; w(\Re A).
\end{align*}

\end{proof}

In the next results, we present accretive versions of the well known power inequality \cite{Halmos} 
\begin{align}\label{lemma_powers}
w(A^k)\leq w^k(A), A\in\mathcal{M}_n, k=1,2,\cdots.
\end{align}
It should be noted that in \eqref{lemma_powers}, only positive integer powers are treated. Now we add the interval $(0,1)$ to these powers. The significance of these results is the observation that when $A$ is positive, $w(A^t)=\|A^t\|$ for any $t\in (0,1).$ For such powers, we find no version of  \eqref{lemma_powers} in the literature. Now we have one that reads as follows. 

\begin{theorem} Let $ A \in \mathcal{S}_{\alpha} $. Then, for $t\in(0,1),$
\begin{align}
 \cos t\alpha\;\cos^t\alpha\;w^t(A)\leq w(A^t)\leq \sec t\alpha\;\sec^{2t}\alpha\;w^t(A).
\end{align}

\end{theorem}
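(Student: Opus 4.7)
The plan is to bootstrap from Proposition 3.1 applied both to $A$ and to $A^{t}$, using Lemma \ref{S_alphat} to control the sectorial index of $A^t$, and then use Lemma \ref{positive_power_of_ real} to pass between $\Re A^{t}$ and $(\Re A)^{t}$, exploiting the fact that $\Re A$ is positive (so $\|(\Re A)^{t}\|=\|\Re A\|^{t}=w(\Re A)^{t}$).

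For the upper bound, first I would apply Proposition \ref{prop_1} to $A^{t}\in \mathcal{S}_{t\alpha}$ to get $w(A^{t})\leq \|A^{t}\|$ and $\|A^{t}\|\leq \sec t\alpha\;\|\Re A^{t}\|$ (the latter from Lemma \ref{norm}). Next, the right inequality in Lemma \ref{positive_power_of_ real}, namely $\cos^{2t}\alpha\;\Re A^{t}\leq (\Re A)^{t}$, gives $\|\Re A^{t}\|\leq \sec^{2t}\alpha\;\|(\Re A)^{t}\|=\sec^{2t}\alpha\;\|\Re A\|^{t}$. Since $\Re A>0$, $\|\Re A\|=w(\Re A)$, and by Lemma \ref{nume_real<nemu A}, $w(\Re A)\leq w(A)$. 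Chaining these yields $w(A^{t})\leq \sec t\alpha\,\sec^{2t}\alpha\,w^{t}(A)$.

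For the lower bound, I would again use Proposition \ref{prop_1} on $A^{t}\in\mathcal{S}_{t\alpha}$: $w(A^{t})\geq \cos t\alpha\;\|A^{t}\|$. Then I drop down to the real part via the standard estimate $\|A^{t}\|\geq \|\Re A^{t}\|$, and use the left inequality in Lemma \ref{positive_power_of_ real}, $(\Re A)^{t}\leq \Re A^{t}$, which being an inequality between positive matrices gives $\|\Re A^{t}\|\geq \|(\Re A)^{t}\|=\|\Re A\|^{t}$. Finally, Proposition \ref{prop_1} applied to $A\in\mathcal{S}_\alpha$ yields $\|\Re A\|\geq \cos\alpha\;\|A\|\geq \cos\alpha\;w(A)$ (using $w(A)\le\|A\|$ together with the reverse bound $\|A\|\le \sec\alpha\,\|\Re A\|$ from Lemma \ref{norm}). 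Raising to the power $t$ and combining gives $w(A^{t})\geq \cos t\alpha\,\cos^{t}\alpha\,w^{t}(A)$.

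The only subtle point is to be careful that the two inequalities from Lemma \ref{positive_power_of_ real} are inequalities between positive semidefinite operators, so that norms are monotone; since on the upper side we use $\Re A^{t}\leq \sec^{2t}\alpha\;(\Re A)^{t}$ and on the lower side $(\Re A)^{t}\leq \Re A^{t}$, both comparisons involve positive operators and the norm monotonicity goes through without issue. Apart from that, the proof is essentially a two-step interpolation: descend from $A^{t}$ to $\Re A^{t}$, then swap to $(\Re A)^{t}$, then ascend to $w(A)^{t}$, picking up one factor of $\cos(\cdot)$ or $\sec(\cdot)$ at each step.
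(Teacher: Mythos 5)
Your proposal is correct and follows essentially the same route as the paper's proof: descend from $\|A^t\|$ to $\|\Re A^t\|$ via Lemma \ref{norm} (with $A^t\in\mathcal{S}_{t\alpha}$ from Lemma \ref{S_alphat}), swap to $\|(\Re A)^t\|=\|\Re A\|^t$ via Lemma \ref{positive_power_of_ real}, and finish with $w(\Re A)\le w(A)$ on one side and $\cos\alpha\,\|A\|\le\|\Re A\|$ on the other. The only difference is cosmetic: you make explicit the sectorial index of $A^t$ and the positivity needed for norm monotonicity, points the paper leaves implicit.
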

\begin{proof}
Let $t\in(0,1).$ Then
\begin{align*}
w(A^t)\leq \|A^t\|&\leq\sec t\alpha\;\|\Re A^t\| \hspace{1cm}\text{(by Lemma \ref{norm})}\\
&\leq \sec t\alpha\;\sec^{2t}\alpha\;\|\Re^t A\|\hspace{1cm}\text{(by Lemma \ref{positive_power_of_ real})}\\
&= \sec t\alpha\;\sec^{2t}\alpha\;\|\Re A\|^t\\
&=\sec t\alpha\;\sec^{2t}\alpha\;w^t(\Re A)\\
&\leq \sec t\alpha\;\sec^{2t}\alpha\;w^t(A).\hspace{1cm}\text{(by Lemma \ref{nume_real<nemu A})}
\end{align*}
Thus, we have shown the second inequality. To show the first inequality, we have 
\begin{align*}
w(A^t)\geq \cos t\alpha\;\|A^t\|&\geq\cos t\alpha\;\|\Re A^t\| \hspace{1cm}\text{(by Lemma \ref{norm})}\\
&\geq \cos t\alpha\;\|\Re^t A\|\hspace{1cm}\text{(by Lemma \ref{positive_power_of_ real})}\\
&=\cos t\alpha\;\|\Re A\|^t\\
&\geq\cos t\alpha\;\cos^t \alpha\;\|A\|^t\hspace{1cm}\text{(by Lemma \ref{norm})}\\
&\geq\cos t\alpha\;\cos^t \alpha\;w^t(A),\hspace{0.7cm}\text{(by Lemma \ref{eq_num_oper})}.
\end{align*}
This completes the proof.

When $A$ are positive, then $\alpha=0,$ and we obtain the well known equality $\|A^t\|=\|A\|^t$.

\end{proof}
On the other hand, a negative-power version of  \eqref{lemma_powers} can be stated as follows.
\begin{theorem} Let $ A \in \mathcal{S}_{\alpha} $. Then, for $t\in[0,1],$
\begin{align}\label{ine_negative_power}
 \cos t\alpha\;\cos^{2t} \alpha\;w^{-t}(A)\leq w(A^{-t}).
\end{align}
\end{theorem}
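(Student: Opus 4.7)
The plan is to mirror the strategy of the preceding positive-power theorem: bound $w(A^{-t})$ below by $\|A^{-t}\|$ up to a sectorial factor, then pass from $\|A^{-t}\|$ to $\|\Re(A^{-t})\|$, relate $\Re(A^{-t})$ to a negative power of $\Re A$, and finally push $\|\Re A\|$ back to $w(A)$. Each stage will contribute one of the cosine factors in the desired bound.

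Concretely, I would first invoke Lemma \ref{S_alphat} to see that $A^{-t}\in\mathcal{S}_{t\alpha}$, and then apply Proposition \ref{prop_1} to extract the leading factor
\begin{align*}
w(A^{-t})\geq \cos(t\alpha)\,\|A^{-t}\|.
\end{align*}
Using the general fact $\|\Re X\|\leq \|X\|$ together with Lemma \ref{negtive_power_of_ real} applied with exponent $-t\in[-1,0]$ (which rearranges to $\cos^{2t}\alpha\,\Re^{-t}A\leq \Re A^{-t}$), and the fact that $\|\cdot\|$ preserves the order among positive matrices, I would next derive
\begin{align*}
\|A^{-t}\|\geq \|\Re A^{-t}\|\geq \cos^{2t}\alpha\,\|(\Re A)^{-t}\|.
\end{align*}

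The final stage applies Lemma \ref{lem_negative_power_norm} to $(\Re A)^{t}$, giving $\|(\Re A)^{-t}\|\geq \|(\Re A)^{t}\|^{-1}=\|\Re A\|^{-t}$; and since $\Re A>0$ forces $w(\Re A)=\|\Re A\|$, Lemma \ref{nume_real<nemu A} yields $\|\Re A\|\leq w(A)$ and hence $\|\Re A\|^{-t}\geq w(A)^{-t}$. Concatenating these estimates produces exactly $\cos(t\alpha)\cos^{2t}\alpha\,w^{-t}(A)\leq w(A^{-t})$.

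The main subtlety is the fourth step: one cannot directly identify $\|(\Re A)^{-t}\|$ with $\|\Re A\|^{-t}$, since the former is governed by the smallest eigenvalue of $\Re A$ rather than its largest. The remedy is Lemma \ref{lem_negative_power_norm}, which relinquishes that sharpness but delivers precisely the form needed to match against $w(A)^{-t}$ via Lemma \ref{nume_real<nemu A}. Everything else is routine chaining, and the endpoint case $t=0$ is immediate.
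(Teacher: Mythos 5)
Your proposal is correct and follows essentially the same chain as the paper's own proof: Proposition \ref{prop_1} on $A^{-t}\in\mathcal{S}_{t\alpha}$, then $\|A^{-t}\|\geq\|\Re A^{-t}\|$, Lemma \ref{negtive_power_of_ real}, Lemma \ref{lem_negative_power_norm}, and finally $w(\Re A)=\|\Re A\|\leq w(A)$. The subtlety you flag about $\|(\Re A)^{-t}\|$ versus $\|\Re A\|^{-t}$ is handled exactly as in the paper, via Lemma \ref{lem_negative_power_norm}.
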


\begin{proof} For such $t\in[0,1]$, we have
\begin{align*}
w(A^{-t})\geq \cos t\alpha\;\|A^{-t}\|&\geq \cos t\alpha\;\|\Re A^{-t}\| \hspace{1.2cm}\text{(by Lemma\; \ref{norm})}\\
&\geq \cos t\alpha\;\cos^{2t}\alpha\;\|\Re^{-t} A\|\hspace{1cm}\text{(by Lemma\; \ref{negtive_power_of_ real})}\\
&\geq\cos t\alpha\;\cos^{2t}\alpha\;\|\Re A\|^{-t}\hspace{1.2cm}\text{(by Lemma\;\ref{lem_negative_power_norm})}\\
&=\cos t\alpha\;\cos^{2t}\alpha\;w^{-t}(\Re A)\\
&\geq \cos t\alpha\;\cos^{2t}\alpha\;w^{-t}(A),\hspace{1cm}\text{(by Lemma\; \ref{nume_real<nemu A})}
\end{align*}
completing the proof.

When $A$ are positive, then $\alpha=0,$ and we obtain the well known inequality $\|A\|^{-t}\leq\|A^{-t}\|$, for $t\in[0,1].$
\end{proof}
In particular, we have the following interesting inverse relation. It should be noted that in general, we have no relation between $w^{-1}(A)$ and $w(A^{-1}).$ Now we have the following accretive version.
\begin{corollary} \label{wA^-1} Let $ A\in S_{\alpha}. $  Then
\begin{align}
 \cos^{3}\alpha\;w^{-1}(A)\leq w(A^{-1}).
\end{align}
\end{corollary}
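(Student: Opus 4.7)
The plan is that this corollary is essentially just the specialization $t = 1$ of the preceding theorem. Inequality \eqref{ine_negative_power} states that for $A \in \mathcal{S}_\alpha$ and $t \in [0,1]$,
$$\cos t\alpha\;\cos^{2t}\alpha\;w^{-t}(A)\leq w(A^{-t}).$$
Substituting $t = 1$ collapses the prefactor to $\cos\alpha \cdot \cos^{2}\alpha = \cos^3\alpha$, and replaces $A^{-t}$ by $A^{-1}$, yielding exactly the desired inequality $\cos^3\alpha\;w^{-1}(A)\leq w(A^{-1})$.

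Before invoking the theorem I would briefly remark on why $A^{-1}$ makes sense: since $A \in \mathcal{S}_\alpha$ is accretive, $\Re A > 0$, which forces $A$ to be invertible, so $A^{-1}$ exists. Moreover Lemma \ref{S_alphat} guarantees that $A^{-1} \in \mathcal{S}_\alpha$, which is what lets the numerical radius of $A^{-1}$ be controlled by $\|A^{-1}\|$ through Proposition \ref{prop_1} inside the proof of the parent theorem.

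There is really no obstacle here; the substantive work has already been carried out in the proof of the negative-power theorem, where the chain of inequalities involving Lemmas \ref{norm}, \ref{negtive_power_of_ real}, \ref{lem_negative_power_norm}, and \ref{nume_real<nemu A} was set up precisely so that the endpoint $t = 1$ would deliver this clean comparison between $w(A^{-1})$ and $w^{-1}(A)$. The proof I would write is therefore a one-line deduction: take $t = 1$ in inequality \eqref{ine_negative_power}.
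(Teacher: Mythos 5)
Your proposal is correct and coincides exactly with the paper's own proof, which is the one-line deduction ``Let $t=1$ in \eqref{ine_negative_power}'', giving the prefactor $\cos\alpha\cdot\cos^{2}\alpha=\cos^{3}\alpha$. The additional remarks on invertibility and sectoriality of $A^{-1}$ are harmless but not needed, since $t=1$ lies in the stated range $[0,1]$ of the parent theorem.
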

\begin{proof} Let $t=1$ in \eqref{ine_negative_power}.
\end{proof}

In the following result, we present a new submultiplicative inequality for the numerical radius. Recall that for general $A,B\in\mathcal{M}_n$, one has $w(AB)\leq 4w(A)w(B).$ When $A$ and $B$ commute, the factor 4 can be reduced to 2, while it can be reduced to 1 when $A$ and $B$ are normal \cite[p 114.]{Halmos}. The following result presents a new bound, that is better than these bounds for  $0<\alpha<\frac{\pi}{3}.$
\begin{theorem} \label{nemu_for_AB}Let $ A, B \in\mathcal{S}_{\alpha}$. Then
\begin{align}
w(AB)\leq \sec^2\alpha\;w(A)w(B).
\end{align}
\end{theorem}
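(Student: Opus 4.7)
The plan is to combine the elementary fact that the operator norm dominates the numerical radius and is submultiplicative with the accretive lower bound for $w$ from Proposition \ref{prop_1}. Concretely, the chain of reasoning I have in mind is
\begin{equation*}
w(AB) \;\leq\; \|AB\| \;\leq\; \|A\|\,\|B\|,
\end{equation*}
where the first inequality is the right half of \eqref{eq_num_oper} and the second is submultiplicativity of the operator norm.

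Next, I would invoke Proposition \ref{prop_1} applied to each of $A$ and $B$ (both of which lie in $\mathcal{S}_\alpha$), which gives $\cos\alpha\;\|A\|\le w(A)$ and $\cos\alpha\;\|B\|\le w(B)$, i.e.\ $\|A\|\le\sec\alpha\,w(A)$ and $\|B\|\le\sec\alpha\,w(B)$. Substituting into the previous chain yields
\begin{equation*}
w(AB) \;\leq\; \|A\|\,\|B\| \;\leq\; \sec\alpha\,w(A)\cdot \sec\alpha\,w(B) \;=\; \sec^{2}\alpha\;w(A)\,w(B),
\end{equation*}
which is the desired inequality.

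There is essentially no obstacle here: the argument is a two-line composition of a standard norm estimate with the already-established accretive refinement $\|A\|\le\sec\alpha\,w(A)$ from Proposition \ref{prop_1}. The only thing worth remarking is the comparison with the classical bound $w(AB)\le 4w(A)w(B)$: since $\sec^{2}\alpha<4$ precisely when $0\le\alpha<\pi/3$, the new estimate is an improvement exactly in that sectorial range, which matches the remark following Proposition \ref{prop_1}. I would not bother with a separate treatment of the commuting or normal case, as the bound above is uniform in $A,B\in\mathcal{S}_\alpha$.
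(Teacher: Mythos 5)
Your proof is correct and is essentially the paper's argument: both reduce to $w(AB)\le\|AB\|\le\|A\|\,\|B\|$ and then convert each operator norm into $\sec\alpha\,w(\cdot)$. The only cosmetic difference is that you cite Proposition \ref{prop_1} directly, while the paper re-derives that bound inline via Lemma \ref{norm} and Lemma \ref{nume_real<nemu A}, which is exactly how Proposition \ref{prop_1} is proved.
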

\begin{proof} We have 
\begin{align*}
w(AB)&\leq\|AB\|\hspace{2.4cm}\text{(by Lemma\; \ref{eq_num_oper})}\\ 
&\leq \|A\| \|B\|\\
&\leq \sec^2\alpha\; \|\Re A\| \|\Re B\|\hspace{1.2cm}\text{(by Lemma\; \ref{norm})}\\
&=\sec^2\alpha\; w(\Re A)w(\Re B)\\
&\leq\sec^2 \alpha\;w(A)w(B),\hspace{1.2cm}\text{(by Lemma\; \ref{nume_real<nemu A})}\\
\end{align*}
which completes the proof.

When $A,B$ are positive, then $\alpha=0,$ and we obtain the well known inequality $w(AB)\leq w(A)w(B).$
\end{proof}

\subsection{The numerical radius and operator monotone functions}
A new type of numerical radius inequalities is discussed then, when relations for $w(f(A))$ and $f(w(A))$ are found. However, for such inequalities to be studied, we prove first that when $A\in\mathcal{S}_{\alpha}$ and $f\in\mathfrak{m}$, then $f(A)\in\mathcal{S}_{\alpha}.$ This follows from the following.

\begin{proposition}\label{prop_asigmab_s}
Let $A,B\in\mathcal{S}_{\alpha}$ and let $f\in\mathfrak{m}.$ Then $A\sigma_fB\in \mathcal{S}_{\alpha}.$
\end{proposition}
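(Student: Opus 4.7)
The plan is to use the integral representation
$$A\sigma_fB=\int^1_0 (A!_sB)\;d\nu_f (s)$$
and argue pointwise inside the integral, then pass to the limit.

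First I would check that the sector class $\mathcal{S}_\alpha$ is closed under the operations that build the weighted harmonic mean. Specifically, $S_\alpha$ is a closed convex cone in $\mathbb{C}$, so for any nonzero vector $x$, the map $X\mapsto \langle Xx,x\rangle$ shows that $\mathcal{S}_\alpha$ is closed under nonnegative linear combinations. Combined with Lemma \ref{S_alphat} (which tells us $A\in\mathcal{S}_\alpha\Rightarrow A^{-1}\in\mathcal{S}_\alpha$), this gives: if $A,B\in\mathcal{S}_\alpha$, then $A^{-1},B^{-1}\in\mathcal{S}_\alpha$, hence $(1-s)A^{-1}+sB^{-1}\in\mathcal{S}_\alpha$ for each $s\in[0,1]$ (with the boundary cases $s\in\{0,1\}$ reducing trivially to $A$ or $B$), and so
$$A!_sB=\bigl((1-s)A^{-1}+sB^{-1}\bigr)^{-1}\in\mathcal{S}_\alpha,\qquad s\in[0,1].$$

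Next I would upgrade the cone property to an integral statement: for any nonzero $x\in\mathbb{C}^n$,
$$\bigl\langle (A\sigma_fB)x,x\bigr\rangle=\int_0^1\bigl\langle (A!_sB)x,x\bigr\rangle\,d\nu_f(s).$$
Each integrand lies in $S_\alpha$ by the previous step, and $\nu_f$ is a probability measure, so the integral is a limit of nonnegative convex combinations of elements of the closed convex set $S_\alpha$, hence still lies in $S_\alpha$. Since $x$ was arbitrary, $W(A\sigma_fB)\subset S_\alpha$, i.e.\ $A\sigma_fB\in\mathcal{S}_\alpha$.

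The only delicate point is the interpretation of $A!_sB$ at the endpoints $s=0,1$, but the definition extends continuously (giving $A$ and $B$ respectively), so the integrand is a well-defined, bounded, $\mathcal{S}_\alpha$-valued function on $[0,1]$ and the integration argument goes through without trouble. Everything else is a direct consequence of the convex-cone structure of $S_\alpha$ together with the known invariance of $\mathcal{S}_\alpha$ under inversion.
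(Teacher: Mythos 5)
Your proposal is correct and follows essentially the same route as the paper: both reduce to the pointwise fact that $A!_sB\in\mathcal{S}_\alpha$ (via invariance of the sector under inversion and nonnegative combinations) and then pass the sector condition through the integral against the probability measure $\nu_f$. The only cosmetic difference is that the paper verifies the last step by the explicit estimate $|\Im\langle(A\sigma_fB)x,x\rangle|\leq\int_0^1\tan\alpha\,\Re\langle(A!_sB)x,x\rangle\,d\nu_f(s)=\tan\alpha\,\Re\langle(A\sigma_fB)x,x\rangle$, whereas you invoke the closed convexity of $S_\alpha$ abstractly.
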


\begin{proof}
Let $A,B\in S_{\alpha}$ and notice that\cite[Definition 4.1]{bedr}
$$A\sigma_fB=\int_{0}^{1}(A!_{s}B)d\nu_f(s)\;\;({\text{see}}\;\eqref{eq_def_sigma})$$ for some positive measure $\nu_f(s)$ on $[0,1].$ Then for any unit vector $x\in\mathbb{C}$, we have
\begin{align*}
\left<A\sigma_fBx,x\right>&=\int_{0}^{1}\left<(A!_sB)x,x\right>d\nu_f(s)\\
&=\int_{0}^{1}h(s)d\nu_f(s)\;({\text{where}}\;h(s)=\left<(A!_sB)x,x\right>)\\
&=c+id,
\end{align*}
where
$$c=\Re \int_{0}^{1}h(s)d\nu_f(s), d=\Im \int_{0}^{1}h(s)d\nu_f(s).$$ We notice that for each $s\in [0,1]$, $h(s)\in S_{\alpha}$ since $A,B\in S_{\alpha}.$ This is due to the fact that $\mathcal{S}_{\alpha}$ is invariant under inversion and addition. To show that $A\sigma_fB\in \mathcal{S}_{\alpha},$ we need to show that $\left<A\sigma_fB)x,x\right>\in S_{\alpha},$ or $|d|\leq \tan (\alpha)c.$ In fact, we have
\begin{align*}
|d|&=\left|\Im \int_{0}^{1}h(s)d\nu_f(s)\right|\\
&\leq \int_{0}^{1}\left|\Im h(s)\right|d\nu_f(s)\\
&\leq \int_{0}^{1}\tan(\alpha)\Re h(s)d\nu_f(s)\;\;(\text{since}\;h(s)\in S_{\alpha})\\
&=\tan(\alpha)c.
\end{align*}
This shows that $A\sigma_fB\in \mathcal{S}_{\alpha}$ and completes the proof.
\end{proof}

Noting \eqref{eq_def_f(A)}, we have
\begin{align*}
I\sigma_f A=f(A), f\in\mathfrak{m}.
\end{align*} 
 Then Proposition \ref{prop_asigmab_s} implies the following.

\begin{corollary}\label{cor_f(A)_sect}
Let $A\in\mathcal{S}_{\alpha}$ and $f\in\mathfrak{m}$. Then $f(A)\in\mathcal{S}_{\alpha}.$
\end{corollary}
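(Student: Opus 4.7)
The plan is to observe that this corollary is essentially a one-line consequence of Proposition~\ref{prop_asigmab_s} combined with the integral representation \eqref{eq_def_f(A)}, and then to simply identify the right specialization.

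First, I would verify that the identity $I\sigma_f A = f(A)$ follows directly from comparing \eqref{eq_def_sigma} and \eqref{eq_def_f(A)}: substituting $A \mapsto I$, $B\mapsto A$ in the harmonic mean gives
\[
I!_sA = ((1-s)I^{-1}+sA^{-1})^{-1} = ((1-s)I + sA^{-1})^{-1},
\]
so integrating against $\nu_f$ reproduces exactly the right-hand side of \eqref{eq_def_f(A)}. Hence $I\sigma_f A = f(A)$.

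Next, I would note that the identity $I$ lies in $\mathcal{S}_\alpha$ for every admissible $\alpha$, since $W(I)=\{1\}\subset S_\alpha$.

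With $I, A \in \mathcal{S}_\alpha$ in hand, applying Proposition~\ref{prop_asigmab_s} to the pair $(I,A)$ immediately yields $I\sigma_f A \in \mathcal{S}_\alpha$, i.e., $f(A)\in \mathcal{S}_\alpha$, which is the desired conclusion.

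There is no real obstacle here; the only thing to be careful about is the bookkeeping between the two integral formulas \eqref{eq_def_sigma} and \eqref{eq_def_f(A)} to confirm that the same probability measure $\nu_f$ appears in both, so that the identification $I\sigma_f A = f(A)$ is literal and not merely formal. Once this matching is checked, the corollary follows with no further work.
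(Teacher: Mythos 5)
Your proposal is correct and follows exactly the paper's route: the authors likewise note that $I\sigma_f A=f(A)$ by comparing \eqref{eq_def_sigma} with \eqref{eq_def_f(A)} and then apply Proposition \ref{prop_asigmab_s} to the pair $(I,A)$. Your extra check that the same measure $\nu_f$ appears in both representations is a reasonable piece of diligence, but the argument is the same.
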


As a special case, we have the following.
\begin{corollary}\label{cor_at_s}
Let $A\in \mathcal{S}_{\alpha}$ and $t\in (0,1)$. Then $A^t\in \mathcal{S}_{\alpha}.$
\end{corollary}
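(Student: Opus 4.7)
The plan is to derive this immediately from Corollary~\ref{cor_f(A)_sect} by specializing to the weighted geometric family. The key observation is that the function $f(x)=x^t$ on $(0,\infty)$ is operator monotone for every $t\in(0,1)$ (Löwner–Heinz), and clearly satisfies $f(1)=1$, so $f\in\mathfrak{m}$. With this choice, the integral representation \eqref{eq_def_f(A)} makes $f(A)=A^t$ well-defined for accretive $A$, and so the corollary applies.

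Concretely, the steps I would carry out are: (i) recall that $x\mapsto x^t$, $t\in(0,1)$, belongs to $\mathfrak{m}$; (ii) note that under this identification $f(A)=A^t$ in the sense of \eqref{eq_def_f(A)}; (iii) apply Corollary~\ref{cor_f(A)_sect}, which asserts $f(A)\in\mathcal{S}_\alpha$ whenever $A\in\mathcal{S}_\alpha$ and $f\in\mathfrak{m}$; and (iv) conclude $A^t\in\mathcal{S}_\alpha$.

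There is essentially no obstacle here, since all the work has been done in Proposition~\ref{prop_asigmab_s} and Corollary~\ref{cor_f(A)_sect}. The only point worth flagging is that Lemma~\ref{S_alphat} actually gives the sharper statement $W(A^t)\subset S_{t\alpha}\subset S_\alpha$, which could be cited as an alternative one-line proof; but since the present corollary is stated only as sectoriality in the original angle $\alpha$, the direct application of Corollary~\ref{cor_f(A)_sect} with $f(x)=x^t$ is the most natural route and keeps the argument internal to the framework just developed.
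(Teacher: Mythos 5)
Your proposal is correct and is essentially the paper's own argument: the corollary is presented there as the special case $f(x)=x^t$ of Corollary~\ref{cor_f(A)_sect}, exactly as you do. Your closing remark about the sharper conclusion $A^t\in\mathcal{S}_{t\alpha}$ via Lemma~\ref{S_alphat} is also made by the authors immediately after the corollary, so nothing is missing.
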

It should be noted that in \cite{drury1}, it is shown that if $A\in\mathcal{S}_{\alpha},$ then $A^t\in\mathcal{S}_{t\alpha}, t\in (0,1),$ a stronger version of Corollary \ref{cor_at_s}.\\
Now we are ready to present the following new relation that allows switching the numerical radius and  operator monotone functions.
\begin{theorem} Let $ A\in\mathcal{S}_{\alpha}$. If $ f\in\mathfrak{m},$  then
\begin{align}\label{w(f)_leq_f(w)}
\cos\alpha\;f(w(A))\leq w(f(A))\leq \sec^3\alpha\;f(w(A)).
\end{align}
\end{theorem}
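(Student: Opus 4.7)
The strategy is to prove both inequalities separately, each via a short chain of the preparatory lemmas. The key preliminary observation is Corollary \ref{cor_f(A)_sect}, which ensures $f(A)\in\mathcal{S}_\alpha$ whenever $A\in\mathcal{S}_\alpha$ and $f\in\mathfrak{m}$; this is what allows the sectorial tools (Lemma \ref{norm} and Proposition \ref{prop_1}) to be applied to $f(A)$ as well as to $A$.

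For the upper bound, I would chain
\[
w(f(A)) \leq \|f(A)\| \leq \sec\alpha\,\|\Re f(A)\| \leq \sec^{3}\alpha\,f(\|\Re A\|) \leq \sec^{3}\alpha\,f(w(A)),
\]
using in order the general inequality $w(\cdot)\leq\|\cdot\|$; Lemma \ref{norm} applied to $f(A)\in\mathcal{S}_\alpha$; the right half of Lemma \ref{realf_fnorm}; and the monotonicity of $f$ combined with $\|\Re A\|=w(\Re A)\leq w(A)$ (positivity of $\Re A$ and Lemma \ref{nume_real<nemu A}).

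For the lower bound, the first three steps are immediate:
\[
w(f(A)) \geq w(\Re f(A)) = \|\Re f(A)\| \geq f(\|\Re A\|),
\]
by Lemma \ref{nume_real<nemu A}, the positivity of $\Re f(A)$ (since $f(A)\in\mathcal{S}_\alpha$), and the left half of Lemma \ref{realf_fnorm}. Next, Lemma \ref{norm} together with $w(A)\leq\|A\|$ gives $\|\Re A\|\geq\cos\alpha\,\|A\|\geq\cos\alpha\,w(A)$, so monotonicity of $f$ yields $f(\|\Re A\|)\geq f(\cos\alpha\,w(A))$.

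The only step that is not a direct quotation of an earlier lemma — and hence the main (essentially only) obstacle — is comparing $f(\cos\alpha\,w(A))$ with $\cos\alpha\,f(w(A))$. To handle it I would invoke the fact that any operator monotone function $f:(0,\infty)\to(0,\infty)$ is concave and extends continuously to $[0,\infty)$ with $f(0)\geq 0$. Writing $\cos\alpha\,w(A)=\cos\alpha\cdot w(A)+(1-\cos\alpha)\cdot 0$ and applying concavity gives
\[
f(\cos\alpha\,w(A)) \geq \cos\alpha\,f(w(A))+(1-\cos\alpha)f(0)\geq \cos\alpha\,f(w(A)).
\]
Chaining this with the previous estimates produces $\cos\alpha\,f(w(A))\leq w(f(A))$, which, together with the upper bound computed above, completes the proof.
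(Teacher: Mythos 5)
Your proposal is correct and follows essentially the same route as the paper: the upper bound is the identical chain through Lemma \ref{norm} and Lemma \ref{realf_fnorm}, and the lower bound is the paper's chain read in reverse, with \eqref{inv_w(rA)} unfolded into $\|\Re A\|\geq\cos\alpha\,\|A\|\geq\cos\alpha\,w(A)$. Your justification of $f(\cos\alpha\,x)\geq\cos\alpha\,f(x)$ via concavity of operator monotone functions and $f(0)\geq 0$ is in fact more careful than the paper's, which asserts $f(sx)\geq sf(x)$ for ``every nonnegative monotone function'' --- a claim that is false without concavity (e.g.\ $f(x)=x^2$) but valid here for exactly the reason you give.
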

\begin{proof} First we note that for every nonegative monotone function $f$ and every $0\leq s\leq 1$, one can get $f(sx)\geq sf(x)$. next we estimate the first inquality
\begin{align*}
\cos\alpha\;f(w(A))&\leq f(\cos\alpha\;w(A))\\
&\leq f(w(\Re A))\hspace{2cm}\text{(by \eqref{inv_w(rA)})}\\
&=f(\|\Re A\|)\\
&\leq\|\Re f(A)\|\hspace{2cm}\text{(by Lemma\; \ref{realf_fnorm})}\\
&=w(\Re f(A))\leq w(f(A)).
\end{align*}
Thus, we have shown the first inequality. To show the second inequality, noting Corollary \ref{cor_f(A)_sect}, we have
\begin{align*}
w(f(A))\leq \|f(A)\|&\leq \sec\alpha\;\|\Re f(A)\|\hspace{1.2cm}\text{(by Lemma\; \ref{norm})}\\
&\leq \sec^3\alpha\; f(\|\Re A\|)\hspace{1.2cm}\text{(by Lemma \ref{realf_fnorm})}\\
&=\sec^3\alpha\; f(w(\Re A))\\
&\leq \sec^3\alpha\; f(w(A)),
\end{align*}
where we have used the fact that $f$ is monotone to obtain the last inequality. 
This completes  the proof.

When $A$ are positive, then $\alpha=0,$ and we obtain the well known inequality $\|f(A)\|= f(\|A\|).$
\end{proof}

\begin{proposition} \label{w_concavity} Let $ A, B\in\mathcal{S}_{\alpha}$. If $ f\in\mathfrak{m},$  then for $\lambda\in (0,1)$,
\begin{align}
w((1-\lambda)f(A)+\lambda f(B))\leq \sec^3\alpha\;f((1-\lambda)w(A)+\lambda w(B)).
\end{align}
\end{proposition}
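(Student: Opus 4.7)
The plan is to chain three elementary facts: (i) convexity/subadditivity of the numerical radius (triangle inequality plus homogeneity), (ii) the already-established inequality \eqref{w(f)_leq_f(w)}, and (iii) concavity of $f$ on $(0,\infty)$, which holds for every $f\in\mathfrak{m}$ since non-negative operator monotone functions on a half-line are concave (a classical consequence of the integral representation underlying $\mathfrak{m}$).

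First, I would bound the left-hand side by
\[
w\bigl((1-\lambda)f(A)+\lambda f(B)\bigr)\le (1-\lambda)\,w(f(A))+\lambda\,w(f(B)),
\]
using subadditivity and positive homogeneity of $w(\cdot)$. Next, since $A,B\in\mathcal{S}_{\alpha}$ and $f\in\mathfrak{m}$, I would apply the previous theorem (inequality \eqref{w(f)_leq_f(w)}) to each term to obtain
\[
(1-\lambda)\,w(f(A))+\lambda\,w(f(B))\le \sec^{3}\alpha\bigl[(1-\lambda)\,f(w(A))+\lambda\,f(w(B))\bigr].
\]
Finally, invoking concavity of $f$ on $(0,\infty)$, I would conclude
\[
(1-\lambda)\,f(w(A))+\lambda\,f(w(B))\le f\bigl((1-\lambda)w(A)+\lambda w(B)\bigr),
\]
and combining the three inequalities yields the required bound.

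The only genuinely non-mechanical step is the use of concavity of $f$; everything else is a routine application of the triangle inequality and the theorem proved just above. Since concavity of operator monotone functions on $(0,\infty)$ is a standard fact (and is implicit in the integral representation $f(x)=\int_{0}^{1}((1-s)+sx^{-1})^{-1}d\nu_f(s)$ used in this paper, as each integrand $x\mapsto((1-s)+sx^{-1})^{-1}$ is concave and concavity is preserved under positive integration), I expect no real obstacle. If the referee wants this spelled out, one can either cite Bendat--Sherman/Löwner or verify concavity of each integrand directly from its representation and pass concavity through the integral. No sectoriality assumption beyond $A,B\in\mathcal{S}_\alpha$ is needed for this last step, so the proof is essentially a three-line assembly of previously established pieces.
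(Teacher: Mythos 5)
Your proposal is correct and follows exactly the same route as the paper's proof: triangle inequality for $w(\cdot)$, then inequality \eqref{w(f)_leq_f(w)} applied to each term, then concavity of $f$. The extra justification you give for why $f\in\mathfrak{m}$ is concave is a welcome addition, since the paper invokes this fact without comment.
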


\begin{proof} We have 

\begin{align*}
w((1-\lambda)f(A)+\lambda f(B))&\leq (1-\lambda)w(f(A))+\lambda w(f(B))\\
&\leq \sec^3\alpha\;\left( (1-\lambda)f(w(A))+\lambda f(w(B))\right) \hspace{2cm} \text{(by \eqref{w(f)_leq_f(w)})}\\
&\leq \sec^3\alpha\; f((1-\lambda)w(A)+\lambda w(B)), 
\end{align*}
where we have used the fact that $f$ is concave to obtain the last inequality.
This completes the proof.
\end{proof}

\begin{corollary} \label{exm} Let $ A, B\in\mathcal{S}_{\alpha}$. Then, for $0<t<1,$  
\begin{align*}
w(A^t +B^t)\leq 2^{1-t}\sec^3\alpha\;\left( w(A)+w(B)\right)^t. 
\end{align*} 
\end{corollary}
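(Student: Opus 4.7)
The plan is to simply specialize Proposition \ref{w_concavity} to the operator monotone function $f(x)=x^t$ (which lies in $\mathfrak{m}$ for $t\in(0,1)$) and the parameter $\lambda=\tfrac12$, then use the absolute homogeneity of the numerical radius to clear the convex combinations on both sides.

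First I would verify the hypothesis: for $t\in(0,1)$ the power function $x^t$ is operator monotone on $(0,\infty)$ and satisfies $f(1)=1$, so $f\in\mathfrak{m}$. Since $A,B\in\mathcal{S}_\alpha$, Proposition \ref{w_concavity} applies, yielding
\begin{align*}
w\!\left(\tfrac{1}{2}A^{t}+\tfrac{1}{2}B^{t}\right)
\leq \sec^{3}\alpha\;\left(\tfrac{1}{2}w(A)+\tfrac{1}{2}w(B)\right)^{t}.
\end{align*}

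The next step is purely algebraic: by the homogeneity $w(cX)=c\,w(X)$ for $c\geq 0$, the left-hand side equals $\tfrac{1}{2}w(A^{t}+B^{t})$, while the right-hand side equals $\sec^{3}\alpha\cdot 2^{-t}\,(w(A)+w(B))^{t}$. Multiplying through by $2$ gives exactly the claimed bound
\begin{align*}
w(A^{t}+B^{t})\leq 2^{1-t}\sec^{3}\alpha\;(w(A)+w(B))^{t}.
\end{align*}

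There is essentially no obstacle here; the result is a direct specialization. The only thing worth double-checking is that $f(x)=x^t$ really is a member of $\mathfrak{m}$ (standard, via the Löwner–Heinz inequality) and that Proposition \ref{w_concavity} is stated for general $\lambda\in(0,1)$ so that $\lambda=\tfrac12$ is admissible — both of which are immediate from the excerpt.
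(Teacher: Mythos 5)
Your proposal is correct and matches the paper's own proof, which likewise obtains the corollary by setting $f(x)=x^t$ and $\lambda=\tfrac12$ in Proposition \ref{w_concavity}; you merely spell out the homogeneity step $w(cX)=c\,w(X)$ that the paper leaves implicit. No further comment is needed.
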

\begin{proof} In Proposition \ref{w_concavity}, let $f(x)=x^t, \; t\in (0,1)$ and  $\lambda=\dfrac{1}{2}$. 
\end{proof}

On the other hand, a subadditive inequality for the numerical radius with operator monotone functions is shown next. This inequality is the numerical radius version of the celebrated result stating that
$$|||f(A+B)|||\leq |||f(A)+f(B)|||, A,B\geq 0, f\in\mathfrak{m},$$ shown in \cite{ando_zhan} for any unitarily invariant norm $|||\cdot|||$ on $\mathcal{M}_n.$ Now we present the numerical radius version of this inequality, noting that the numerical radius is not a unitarily invariant norm.
\begin{theorem}\label{nemu_f} Let $ A, B\in\mathcal{S}_{\alpha}$. If $ f\in\mathfrak{m},$  then
\begin{align}
w(f(A+B))\leq \sec^3\alpha\;\left( w(f(A))+w(f(B)\right) .
\end{align}
\end{theorem}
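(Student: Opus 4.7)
The plan is to bound $w(f(A+B))$ by $\|f(A+B)\|$, then use the sectorial--to--real-part inequalities of Lemmas \ref{norm} and \ref{lemma_f_real_sec_f} to pass into the positive setting, apply the Ando--Zhan inequality (Lemma \ref{lemma_ando_zhan}) to split the contributions of $A$ and $B$, and finally climb back to numerical radii via Lemma \ref{realf_fnorm} and Lemma \ref{nume_real<nemu A}. The exponent $3$ in $\sec^3\alpha$ will arise as the product of one $\sec\alpha$ from Lemma \ref{norm} and one $\sec^2\alpha$ from Lemma \ref{lemma_f_real_sec_f}; no further secant factors will be introduced.

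Concretely, since $\mathcal{S}_\alpha$ is a convex cone we have $A+B\in\mathcal{S}_\alpha$, and by Corollary \ref{cor_f(A)_sect} also $f(A+B)\in\mathcal{S}_\alpha$, so the right lemmas apply. I would then write the chain
\begin{align*}
w(f(A+B)) &\leq \|f(A+B)\| \\
 &\leq \sec\alpha\,\|\Re f(A+B)\| \hspace{1.2cm}\text{(by Lemma\; \ref{norm})}\\
 &\leq \sec^{3}\alpha\,\|f(\Re A+\Re B)\| \hspace{1.2cm}\text{(by Lemma\; \ref{lemma_f_real_sec_f})}\\
 &\leq \sec^{3}\alpha\,\|f(\Re A)+f(\Re B)\| \hspace{1.2cm}\text{(by Lemma\; \ref{lemma_ando_zhan})}\\
 &\leq \sec^{3}\alpha\,\bigl(\|f(\Re A)\|+\|f(\Re B)\|\bigr).
\end{align*}
For each term $\|f(\Re X)\|$ with $X\in\{A,B\}$, positivity of $\Re X$ and monotonicity of $f$ give $\|f(\Re X)\|=f(\|\Re X\|)$; Lemma \ref{realf_fnorm} then yields $f(\|\Re X\|)\leq\|\Re f(X)\|$; and since $f(X)\in\mathcal{S}_\alpha$ by Corollary \ref{cor_f(A)_sect}, the matrix $\Re f(X)$ is positive, so $\|\Re f(X)\|=w(\Re f(X))\leq w(f(X))$ by Lemma \ref{nume_real<nemu A}. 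Substituting these inequalities for $X=A$ and $X=B$ into the last line produces the stated bound.

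I do not anticipate a substantive obstacle: the argument is a concatenation of lemmas that are already in hand, and the only real bookkeeping points are tracking $\sec\alpha\cdot\sec^2\alpha=\sec^3\alpha$ and using positivity of $\Re A,\Re B,\Re f(A),\Re f(B)$ to switch freely between operator norms, values of $f$, and numerical radii. The single genuinely nontrivial ingredient is the Ando--Zhan inequality (Lemma \ref{lemma_ando_zhan}), which supplies the only subadditivity step in the chain; everything else is a mechanical translation between sectorial and positive data.
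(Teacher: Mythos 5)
Your proof is correct and follows essentially the same chain as the paper's: bound $w(f(A+B))$ by $\|f(A+B)\|$, pass to $\sec^{3}\alpha\,\|f(\Re A+\Re B)\|$ via Lemmas \ref{norm} and \ref{lemma_f_real_sec_f}, apply Ando--Zhan, and return to numerical radii. The only (immaterial) difference is that you apply the triangle inequality at the operator-norm level and convert each summand separately via Lemma \ref{realf_fnorm}, whereas the paper keeps the sum together, passes to $\|\Re(f(A)+f(B))\|$ and then to $w(f(A)+f(B))$ before splitting.
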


\begin{proof} We have the following chain of inequalities
\begin{align*}
w\left( f\left( A+B\right) \right)\leq \| f\left( A+B\right)\|&\leq \sec\alpha\;\|\Re f\left( A+B\right)\|\hspace{1.2cm}\text{(by Lemma\; \ref{norm})}\\
&\leq \sec^3\alpha\;\| f\left( \Re A+\Re B\right)\|\hspace{1.2cm}\text{(by Lemma\; \ref{lemma_f_real_sec_f})}\\
&\leq \sec^3\alpha\;\| f\left(\Re A\right) +f\left(\Re B\right)\|\hspace{1.2cm}\text{(by Lemma\; \ref{lemma_ando_zhan})}\\
&\leq  \sec^3\alpha\;\| \Re\left(f(A) + f(B)\right)\|\hspace{1.2cm}\text{(by Lemma\; \ref{lemma_f_real_sec_f})}\\
&=\sec^3\alpha\; w\left( \Re\left(f(A) + f(B)\right)\right) \\
&\leq \sec^3\alpha\; w\left( f(A) + f(B)\right) \hspace{1.2cm}\text{(by Lemma\; \ref{nume_real<nemu A})},
\end{align*}
which completes the proof.

\end{proof}

\begin{corollary} \label{cor_f(A+B)}Let $ A, B\in\mathcal{S}_{\alpha}$. Then, for $0<t<1,$
\begin{align}
w((A+B)^t)\leq \sec^{3}\alpha\;w(A^t+B^t).
\end{align}
\end{corollary}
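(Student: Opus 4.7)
The plan is to observe that the proof of Theorem~\ref{nemu_f} actually establishes an intermediate inequality stronger than the one stated in the theorem. Indeed, the displayed chain in that proof reaches
\[
w(f(A+B)) \;\leq\; \sec^{3}\alpha \; w\bigl(f(A)+f(B)\bigr)
\]
at the penultimate line, and only the very last step invokes the triangle inequality $w(f(A)+f(B))\leq w(f(A))+w(f(B))$ to arrive at the advertised form. For the present corollary, we simply stop one step earlier.

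Concretely, I would take $f(x)=x^{t}$ with $t\in(0,1)$. This $f$ belongs to $\mathfrak{m}$, since $x\mapsto x^{t}$ is operator monotone on $(0,\infty)$ with $f(1)=1$, and by Corollary~\ref{cor_at_s} the hypothesis $A,B\in\mathcal{S}_{\alpha}$ ensures $A^{t},B^{t}\in\mathcal{S}_{\alpha}$ (so every intermediate expression stays in the sectorial regime where Lemmas~\ref{norm} and~\ref{lemma_f_real_sec_f} apply). Substituting $f(x)=x^{t}$ into the strengthened inequality above yields exactly
\[
w\bigl((A+B)^{t}\bigr)\;\leq\;\sec^{3}\alpha\;w\bigl(A^{t}+B^{t}\bigr),
\]
which is the claim.

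If a self-contained argument is preferred, I would reproduce the chain from Theorem~\ref{nemu_f} verbatim with $f(x)=x^{t}$: start with $w((A+B)^{t})\leq \|(A+B)^{t}\|$, pass to $\|\Re(A+B)^{t}\|$ via Lemma~\ref{norm} (picking up one $\sec\alpha$), move the real part inside using Lemma~\ref{lemma_f_real_sec_f} (picking up $\sec^{2}\alpha$), invoke Lemma~\ref{lemma_ando_zhan} on the positive matrices $\Re A,\Re B$ to split $(\Re A+\Re B)^{t}\mapsto (\Re A)^{t}+(\Re B)^{t}$, recombine via Lemma~\ref{lemma_f_real_sec_f} to $\Re(A^{t}+B^{t})$, and finally replace norm with numerical radius (equal here since the argument is positive) and invoke Lemma~\ref{nume_real<nemu A}. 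There is no genuine obstacle; the only point worth flagging is that each application of Lemma~\ref{lemma_f_real_sec_f} and Lemma~\ref{norm} requires the relevant matrix to be sectorial, which is precisely what Corollary~\ref{cor_f(A)_sect} and Corollary~\ref{cor_at_s} guarantee for the powers and sums that appear.
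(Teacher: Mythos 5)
Your proof is correct and takes essentially the same route as the paper, which likewise obtains this corollary by setting $f(x)=x^{t}$ in Theorem~\ref{nemu_f}. Your observation that the theorem's stated conclusion $\sec^{3}\alpha\,(w(f(A))+w(f(B)))$ does not by itself yield the corollary (the triangle inequality goes the wrong way), and that one must instead use the stronger intermediate bound $\sec^{3}\alpha\;w(f(A)+f(B))$ actually reached at the end of the chain in the theorem's proof, is a careful point that the paper glosses over when it calls the corollary an ``immediate consequence.''
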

\begin{proof} This is an immediate consequence of Theorem \ref{nemu_f}, by putting $f(x)=x^t,$ for $0\leq t\leq 1$.

When $A,B$ are positive, then $\alpha=0,$ and we obtain the well known inequality $\|(A+B)^t\|\leq \|A^t+B^t\|,$ for $t\in[0,1]$.
\end{proof}

\begin{corollary} Let $ A, B\in\mathcal{S}_{\alpha}$. Then, for $0<t<1,$
\begin{align}
\cos^{3}\alpha\;w((A+B)^t)\leq w(A^t+B^t)\leq 2^{1-t}\sec^3\alpha\;\left( w(A)+w(B)\right)^t. 
\end{align}
\end{corollary}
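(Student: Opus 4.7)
The plan is to observe that the statement is simply the concatenation of two inequalities already established in the two immediately preceding corollaries, so no new work is required beyond combining them.

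For the left inequality, I would start from Corollary \ref{cor_f(A+B)}, which asserts that for $0<t<1$ and $A,B\in\mathcal{S}_\alpha$,
\begin{align*}
w((A+B)^t)\leq \sec^3\alpha\;w(A^t+B^t).
\end{align*}
Multiplying both sides by $\cos^3\alpha$ yields $\cos^3\alpha\;w((A+B)^t)\leq w(A^t+B^t)$, which is exactly the left half of the claim.

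For the right inequality, I would invoke Corollary \ref{exm} verbatim: for $0<t<1$,
\begin{align*}
w(A^t+B^t)\leq 2^{1-t}\sec^3\alpha\;(w(A)+w(B))^t.
\end{align*}
Concatenating the two bounds produces the displayed double inequality.

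There is no real obstacle here, since the heavy lifting has already been done: Corollary \ref{cor_f(A+B)} depended on the subadditive inequality of Theorem \ref{nemu_f} specialized to $f(x)=x^t$, while Corollary \ref{exm} came from Proposition \ref{w_concavity} with the same choice of $f$ and $\lambda=1/2$. The only thing to check is that both corollaries apply under the same hypotheses $A,B\in\mathcal{S}_\alpha$ and $t\in(0,1)$, which is the case. A one-line proof of the form ``combine Corollary \ref{cor_f(A+B)} with Corollary \ref{exm}'' suffices.
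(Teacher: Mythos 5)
Your proof is correct and matches the paper's own argument, which likewise just cites Corollary \ref{exm} and Corollary \ref{cor_f(A+B)} and concatenates them. Your version is actually slightly more explicit, since you spell out the multiplication by $\cos^3\alpha$ needed to rearrange the left-hand bound.
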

\begin{proof}
It's result from Corollary \ref{exm} and Corollary \ref{cor_f(A+B)}.
\end{proof}

When $A,B\in\mathcal{M}_n$ are positive, then clearly $w(A+B)\geq \max(w(A),w(B)).$ If either $A$ or $B$ is not positive, this inequality is not necessarily true. However, when $A,B$ are sectorial, we have the following version.

\begin{theorem} Let $ A, B\in\mathcal{S}_{\alpha}$. Then
\begin{align}
\cos^2\alpha\;\max(w(A),w(B))\leq w(A+B).
\end{align}

\end{theorem}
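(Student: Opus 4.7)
The plan is to reduce the lower bound for $w(A+B)$ to a lower bound on the operator norm of a positive matrix, and then pull that bound back to $A$ (and $B$) via Lemma \ref{norm}. The key structural observation is that $\Re(A+B)=\Re A+\Re B$ is a sum of two positive matrices, so in the operator (Löwner) order it dominates each of its summands, which is exactly what lets us extract a lower bound in terms of a single matrix.

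Concretely, I would chain together the following five steps. First, Lemma \ref{nume_real<nemu A} gives $w(A+B)\geq w(\Re(A+B))=w(\Re A+\Re B)$. Second, since $\Re A+\Re B>0$, the numerical radius of a positive matrix equals its operator norm, so $w(\Re A+\Re B)=\|\Re A+\Re B\|$. Third, from $\Re B\geq 0$ one has the operator inequality $\Re A+\Re B\geq\Re A\geq 0$, and monotonicity of the operator norm on positive matrices yields $\|\Re A+\Re B\|\geq\|\Re A\|$. Fourth, Lemma \ref{norm} applied to $A\in\mathcal{S}_\alpha$ gives $\|\Re A\|\geq\cos\alpha\,\|A\|$. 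Fifth, \eqref{eq_num_oper} gives $\|A\|\geq w(A)$. Concatenating these inequalities produces $w(A+B)\geq\cos\alpha\,w(A)$. By symmetry the same reasoning with the roles of $A$ and $B$ swapped gives $w(A+B)\geq\cos\alpha\,w(B)$, so $w(A+B)\geq\cos\alpha\,\max(w(A),w(B))$, and the claim follows from $\cos\alpha\geq\cos^{2}\alpha$ on $[0,\pi/2)$.

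There is no real obstacle; the only delicate point is step three, where dropping $\Re B$ from $\|\Re A+\Re B\|$ genuinely needs both summands to be positive, which is exactly what accretivity of $A$ and $B$ guarantees. It is worth noting that the outline above in fact proves the strictly stronger inequality $\cos\alpha\,\max(w(A),w(B))\leq w(A+B)$, of which the stated version with $\cos^{2}\alpha$ is an immediate weakening; one may equally well obtain $\cos^{2}\alpha$ directly by applying Proposition \ref{prop_1} to $A+B\in\mathcal{S}_\alpha$ (to write $w(A+B)\geq\cos\alpha\,\|A+B\|$) and then estimating $\|A+B\|\geq\|\Re A+\Re B\|\geq\|\Re A\|\geq\cos\alpha\,\|A\|\geq\cos\alpha\,w(A)$.
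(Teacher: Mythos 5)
Your proof is correct, and it takes a genuinely different route from the paper's. The paper first applies Proposition \ref{prop_1} to $A+B$ to write $w(A+B)\geq\cos\alpha\,\|A+B\|$ (which already costs one factor of $\cos\alpha$ and implicitly uses that $A+B\in\mathcal{S}_\alpha$), passes to $\|\Re A+\Re B\|$ via Lemma \ref{norm}, and then obtains $\|\Re A+\Re B\|\geq\max(\|\Re A\|,\|\Re B\|)$ by a detour through the $2\times 2$ off-diagonal block matrix, combining Lemma \ref{abu_omar_kittaneh}, inequality \eqref{eq_num_oper} and Lemma \ref{max_norm}, before converting back with \eqref{inv_w(rA)} to land on the constant $\cos^{2}\alpha$. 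You instead open with $w(A+B)\geq w(\Re(A+B))=\|\Re A+\Re B\|$, which loses nothing, and replace the block-matrix machinery with the elementary observation that $0\leq\Re A\leq\Re A+\Re B$ forces $\|\Re A\|\leq\|\Re A+\Re B\|$; a single application of Lemma \ref{norm} and $w(A)\leq\|A\|$ then finishes the argument. The payoff is twofold: your argument is lighter (no operator-matrix lemmas needed), and it proves the strictly stronger bound $\cos\alpha\,\max(w(A),w(B))\leq w(A+B)$, of which the stated $\cos^{2}\alpha$ version is an immediate consequence. Every step you invoke is justified by results available in the paper, so there is no gap.
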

\begin{proof} Let $A, B\in\mathcal{S}_{\alpha}$. Then
\begin{align*}
w(A+B)&\geq \cos\alpha\;\|A+B\| \hspace{2cm} \text{(by Proposition\; \ref{prop_1})}\\
&\geq\cos\alpha\;\|\Re A+\Re B\| \hspace{2cm} \text{(by Lemma\; \ref{norm})}\\
&= 2\cos\alpha\;w\left( \begin{pmatrix}
0&\Re A\\
\Re B&0
\end{pmatrix}\right) \hspace{2cm} \text{(Lemma\;\ref{abu_omar_kittaneh})}\\
&\geq\cos\alpha\;\left\|\begin{pmatrix}
0&\Re A\\
\Re B&0
\end{pmatrix}\right\|\hspace{2cm}\text{(by \; \eqref{eq_num_oper})}\\
&=\cos\alpha\;\max\left( \|\Re A\|,\|\Re B\|\right) \hspace{2cm} \text{(by Lemma\; \ref{max_norm})}\\
&=\cos\alpha\;\max\left( w(\Re A),w(\Re B)\right) \\
&\geq\cos^2\alpha\;\max\left( w(A),w(B)\right),\hspace{2cm} \text{(by \; \ref{inv_w(rA)})}\\
\end{align*}
completing the proof.
\end{proof}

\subsection{The numerical radius and operator mean}
In this part of the paper, we present another new type of numerical radius inequalities, where the numerical radius of operator means is discussed. When $A,B\in\mathcal{M}_n$ are positive, then for any operator mean $\sigma$, one has $$A\sigma B>0\Rightarrow w(A\sigma B)=\|A\sigma B\|.$$ This makes the study of numerical radius inequalities of means of positive matrices trivial.

Now we have the following numerical radius action over the operator mean of sectorial matrices.
\begin{theorem} Let $ A, B\in\mathcal{S}_{\alpha}$. If $ f\in\mathfrak{m}$, then
\begin{align}\label{num_of_sigma}
w(A \sigma_f B)\leq\sec^3\alpha\;\left( w(A)\sigma_f w(B)\right).
\end{align}
\end{theorem}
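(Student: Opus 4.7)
The plan is to run the inequality through the chain of lemmas on sectorial matrices, peeling factors of $\sec\alpha$ as we pass between numerical radius, norm, real part, and operator mean.

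First I would note that by Proposition \ref{prop_asigmab_s} the matrix $A\sigma_f B$ itself lies in $\mathcal{S}_\alpha$, which allows me to apply the sectorial inequalities to it. Using \eqref{eq_num_oper} I bound $w(A\sigma_f B)\leq \|A\sigma_f B\|$, then Lemma \ref{norm} gives $\|A\sigma_f B\|\leq \sec\alpha\,\|\Re(A\sigma_f B)\|$, and Lemma \ref{lemma_real_a_sigma_b_less} upgrades this to $\|\Re(A\sigma_f B)\|\leq \sec^2\alpha\,\|(\Re A)\,\sigma_f\,(\Re B)\|$. This accounts for the $\sec^3\alpha$ factor.

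Next, since $\Re A$ and $\Re B$ are positive, Lemma \ref{sigma_norm} gives
\begin{align*}
\|(\Re A)\,\sigma_f\,(\Re B)\|\leq \|\Re A\|\,\sigma_f\,\|\Re B\|.
\end{align*}
Because $\Re A>0$ and $\Re B>0$, we have $\|\Re A\|=w(\Re A)$ and $\|\Re B\|=w(\Re B)$. Finally, Lemma \ref{nume_real<nemu A} yields $w(\Re A)\leq w(A)$ and $w(\Re B)\leq w(B)$, and then monotonicity of the operator mean (Lemma \ref{monoto_mean}) promotes these scalar inequalities to
\begin{align*}
w(\Re A)\,\sigma_f\,w(\Re B)\leq w(A)\,\sigma_f\,w(B).
\end{align*}

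Concatenating these steps produces the required estimate. I do not anticipate a serious obstacle here, since every ingredient is already in place; the only point requiring a moment's care is the justification that $A\sigma_f B\in\mathcal{S}_\alpha$, so that Lemma \ref{norm} can be invoked with the sectorial index $\alpha$, and this is exactly the content of Proposition \ref{prop_asigmab_s}. The chain also explains the specific exponent $3$ on $\sec\alpha$: one factor comes from passing from the norm of $A\sigma_f B$ to the norm of its real part, and two more come from swapping $\Re(A\sigma_f B)$ with $(\Re A)\sigma_f(\Re B)$.
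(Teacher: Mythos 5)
Your proposal is correct and follows essentially the same chain as the paper's own proof: pass to the operator norm, apply Lemma \ref{norm} (justified by Proposition \ref{prop_asigmab_s}), then Lemma \ref{lemma_real_a_sigma_b_less}, Lemma \ref{sigma_norm}, and finally Lemma \ref{nume_real<nemu A} together with monotonicity of the mean. The accounting of the $\sec^3\alpha$ factor matches the paper exactly.
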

\begin{proof}
Noting Proposition \ref{prop_asigmab_s}, we have
\begin{align*}
w(A \sigma_f B)&\leq \|A \sigma_f B\|\\
&\leq \sec\alpha\;\|\Re(A \sigma_f B)\|\hspace{1.2cm}\text{(by Lemma\; \ref{norm})}\\
&\leq\sec^3\alpha\;\|\Re(A)\sigma_f \Re(B)\|\hspace{1.2cm}\text{(by Lemma\; \ref{lemma_real_a_sigma_b_less})}\\
&\leq \sec^3\alpha\;(\|\Re(A)\|\;\sigma_f\;\| \Re(B)\|)\hspace{1.2cm}\text{(by Lemma\; \ref{sigma_norm})}\\
&= \sec^3\alpha\;(w(\Re A)\;\sigma_f\;w(\Re B))\\
&\leq \sec^3\alpha\;(w(A)\;\sigma_f\; w(B)),\hspace{1.2cm}\text{(by Lemma\; \ref{nume_real<nemu A}\; and Lemma\; \ref{monoto_mean})}
\end{align*}
which completes the proof.
\end{proof}
In particular, when $A,B\in\mathcal{M}_n$ are positive, then we may select $\alpha=0$, and \eqref{num_of_sigma} implies the known inequality
$$\|A\sigma_fB\|\leq \|A\|\sigma_f\|B\|.$$
\begin{corollary}  Let $ A, B\in\mathcal{S}_{\alpha}$. Then, for $0<t<1,$
\begin{equation}\label{nume_sharp_inq} 
 w(A\sharp_{t}B)\leq \sec^{3}\alpha\;w^{1-t}(A)w^t(B).
\end{equation}
\end{corollary}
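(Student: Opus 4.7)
The plan is to obtain this as a direct specialization of the theorem immediately preceding it, namely the inequality
\begin{equation*}
w(A \sigma_f B) \leq \sec^3\alpha\;(w(A)\sigma_f w(B)),
\end{equation*}
applied to the particular operator mean $\sigma_f = \sharp_t$. Recall from the introduction that $\sharp_t$ is the weighted geometric mean, corresponding to the operator monotone function $f(x)=x^t$ with $f(1)=1$, so $f\in\mathfrak{m}$ and the theorem applies verbatim to $A,B\in\mathcal{S}_\alpha$.

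First I would note that since $w(A)$ and $w(B)$ are nonnegative scalars (indeed strictly positive, as $A,B$ are accretive and hence invertible), the weighted geometric mean $\sharp_t$ restricted to positive scalars reduces to
\begin{equation*}
w(A)\sharp_t w(B) = w(A)^{1-t}\,w(B)^t.
\end{equation*}
This is the standard scalar identity for $a\sharp_t b = a^{1-t}b^t$ when $a,b>0$, which is consistent with the operator formula $A\sharp_t B = A^{1/2}(A^{-1/2}BA^{-1/2})^t A^{1/2}$ evaluated on commuting positive scalars.

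Then the result follows immediately by substituting this identity into the theorem:
\begin{equation*}
w(A\sharp_t B) \leq \sec^3\alpha\;(w(A)\sharp_t w(B)) = \sec^3\alpha\; w^{1-t}(A)\,w^t(B).
\end{equation*}

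There is no real obstacle here; the proof is a one-line specialization. The only point that requires a brief comment is the reduction of $\sharp_t$ to the scalar power mean, but this is standard and already implicit in the paper's use of the notation $\sharp_t$ for the operator monotone function $f(x)=x^t$. Hence the entire argument is simply: invoke the preceding theorem with $f(x)=x^t$ and evaluate the resulting scalar mean.
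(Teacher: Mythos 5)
Your proposal is correct and follows exactly the paper's own argument: specialize the preceding theorem \eqref{num_of_sigma} to $\sigma_f=\sharp_t$ (i.e.\ $f(x)=x^t\in\mathfrak{m}$) and evaluate the scalar geometric mean $w(A)\sharp_t w(B)=w^{1-t}(A)w^t(B)$. Your added remarks on why the scalar reduction holds are a slight elaboration of what the paper leaves implicit, but the route is the same.
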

\begin{proof} By \eqref{num_of_sigma} and for $\sigma_f =\sharp_t ,$  where $ t\in(1,0)$ 
\begin{align*}
w(A\sharp_{t}B)&\leq \sec^3\alpha\;(w(A)\;\sharp_t\; w(B))\\
&=\sec^{3}\alpha\;w^{1-t}(A)w^t(B),
\end{align*}
where the last inequality, follows from the definition of $\sharp_t$.
This completes the proof.
\end{proof}
In a similar way, the logarithmic mean satisfies similar property, as follows.
\begin{theorem}
Let $ A, B \in \mathcal{S}_{\alpha} $. Then, for $0<t<1,$
\begin{align}
w(\mathcal{L}(A,B))\leq \sec^3\alpha\;\mathcal{L}(w(A),w(B)).
\end{align}
\end{theorem}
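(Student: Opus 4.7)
The plan is to reduce this statement about the logarithmic mean to the weighted geometric mean case already handled in \eqref{nume_sharp_inq}, by exploiting the integral representation \eqref{loga_matrices}. Everything in sight is built from integrals over $t \in [0,1]$, so the strategy is simply to push the numerical radius inside the integral and apply the pointwise bound.

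First, I would write
\begin{align*}
w\bigl(\mathcal{L}(A,B)\bigr) = w\!\left(\int_0^1 A\sharp_t B \, dt\right) \leq \int_0^1 w(A\sharp_t B)\, dt,
\end{align*}
where the inequality is the standard consequence of $w(\cdot)$ being a norm on $\mathcal{M}_n$ (so that the triangle inequality extends to integrals, exactly as for any norm). Note that Corollary \ref{cor_at_s} (or Proposition \ref{prop_asigmab_s} with $f(x)=x^t$) guarantees $A\sharp_t B \in \mathcal{S}_\alpha$, so the integrand lies in the class we care about and the resulting integral defines a bona fide matrix in $\mathcal{S}_\alpha$.

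Next, I would apply \eqref{nume_sharp_inq} pointwise in $t$ to obtain
\begin{align*}
\int_0^1 w(A\sharp_t B)\, dt \leq \sec^3\alpha \int_0^1 w^{1-t}(A)\, w^t(B)\, dt.
\end{align*}
Since $w(A)$ and $w(B)$ are positive scalars, the integrand on the right is precisely $w(A)\sharp_t w(B)$, so the remaining integral is $\mathcal{L}(w(A),w(B))$ by the scalar version of definition \eqref{loga_matrices}. Chaining the two inequalities gives the claim.

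There is no real obstacle here; the only mildly delicate point is pulling $w(\cdot)$ through the integral, and that is routine because $w$ is a norm. Should one prefer a more hands-on justification, one can note that for any unit vector $x$,
\begin{align*}
\bigl|\langle \mathcal{L}(A,B)x, x\rangle\bigr| = \left|\int_0^1 \langle (A\sharp_t B)x, x\rangle\, dt\right| \leq \int_0^1 w(A\sharp_t B)\, dt,
\end{align*}
and then take the supremum over $x$ on the left.
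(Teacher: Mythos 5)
Your proposal is correct and follows essentially the same route as the paper: push $w(\cdot)$ through the integral defining $\mathcal{L}(A,B)$, apply \eqref{nume_sharp_inq} pointwise in $t$, and recognize the resulting scalar integral as $\mathcal{L}(w(A),w(B))$. The extra justifications you supply (the norm property of $w$ and the sectoriality of $A\sharp_t B$) are fine but not needed beyond what the paper already uses.
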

\begin{proof} By definition of the logarithmic mean \eqref{loga_matrices}, we get
\begin{align*}
w(\mathcal{L}(A,B))&=w\left( \int^{1}_{0}A\sharp_{t}B\;dt\right) \\
&\leq \int^{1}_{0}w(A\sharp_{t}B)\;dt\\
&\leq \sec^3\alpha\;\int^{1}_{0}w^{1-t}(A)w^t(B)\;dt\hspace{1.2cm}\text{(by\; \eqref{nume_sharp_inq})}\\
&=\sec^3\alpha\;\mathcal{L}(w(A),w(B)),
\end{align*}
completing the proof.
\end{proof}
The Heinz means follow the same theme too.
\begin{theorem} Let $A, B\in \mathcal{S}_\alpha$. Then for $t\in(0,1),$
\begin{align}
w(\mathcal{H}_t(A,B))\leq \sec^3\alpha\;\mathcal{H}_t(w(A),w(B)).
\end{align} 
\end{theorem}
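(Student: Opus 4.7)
The plan is to mimic the two preceding proofs (for the logarithmic and weighted geometric means): reduce $\mathcal{H}_t(A,B)$ to a sum of weighted geometric means via its definition \eqref{heinz_for_matrices}, push $w$ inside by the triangle inequality, then apply the already-established bound \eqref{nume_sharp_inq} to each geometric-mean term.

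More precisely, I would start by writing
\begin{align*}
w(\mathcal{H}_t(A,B)) = w\!\left(\frac{A\sharp_t B + A\sharp_{1-t} B}{2}\right) \leq \frac{1}{2}\bigl(w(A\sharp_t B) + w(A\sharp_{1-t} B)\bigr),
\end{align*}
using that $w$ is a norm. Now both $A\sharp_t B$ and $A\sharp_{1-t} B$ are instances of $A\sigma_f B$ for $f(x)=x^t$ and $f(x)=x^{1-t}$ respectively, with $A,B\in\mathcal{S}_\alpha$, so \eqref{nume_sharp_inq} applies to each and gives
\begin{align*}
w(A\sharp_t B) \leq \sec^3\alpha\; w^{1-t}(A)w^t(B), \qquad w(A\sharp_{1-t} B) \leq \sec^3\alpha\; w^t(A)w^{1-t}(B).
\end{align*}

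Substituting back and factoring out $\sec^3\alpha$, the right-hand side becomes
\begin{align*}
\sec^3\alpha\;\frac{w^{1-t}(A)w^t(B) + w^t(A)w^{1-t}(B)}{2} = \sec^3\alpha\;\frac{w(A)\sharp_t w(B) + w(A)\sharp_{1-t} w(B)}{2},
\end{align*}
since for positive scalars $\sharp_t$ coincides with the weighted geometric mean. By \eqref{heinz_for_matrices} the last expression is exactly $\sec^3\alpha\;\mathcal{H}_t(w(A),w(B))$, which is the desired inequality.

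There is no real obstacle here: the argument is a clean combination of the triangle inequality for $w$ with the already-proved corollary \eqref{nume_sharp_inq}. The only point requiring any care is making sure \eqref{nume_sharp_inq} is applied with the correct exponent in each of the two summands, so that the cross-terms $w^{1-t}(A)w^t(B)$ and $w^t(A)w^{1-t}(B)$ reassemble into the Heinz mean of the scalars $w(A)$ and $w(B)$ rather than into some other symmetric combination.
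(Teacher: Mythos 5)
Your proposal is correct and follows exactly the same route as the paper's own proof: expand $\mathcal{H}_t(A,B)$ via \eqref{heinz_for_matrices}, apply the triangle inequality for $w$, and then invoke \eqref{nume_sharp_inq} on each of the two geometric-mean terms. No further comment is needed.
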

\begin{proof}Compute
\begin{align*}
w(\mathcal{H}_t(A,B))&=w\left(\dfrac{A\sharp_t B+A\sharp_{1-t} B}{2}\right) \hspace{4.2cm}\text{(by \;\eqref{heinz_for_matrices})}\\
&\leq \dfrac{w(A\sharp_t B)+w(A\sharp_{1-t} B)}{2}\\
&\leq  \dfrac{\sec^3\alpha}{2}\left(w^{1-t}(A)w^t(B)+w^{t}(A)w^{1-t}(B)\right) \hspace{1.2cm}\text{(by\; \eqref{nume_sharp_inq})}\\
&=\sec^3\alpha\;\mathcal{H}_t(w(A),w(B)).
\end{align*}
The proof is complete.

\end{proof}
A Heinz-type inequality for the numerical radii of accretive matrices maybe stated as follows.
\begin{theorem} Let $A, B\in \mathcal{S}_\alpha$. Then for $t\in(0,1),$
\begin{align}
\cos^4\alpha\;w(A\sharp B)\leq w(\mathcal{H}_t(A,B))\leq \dfrac{\sec^4\alpha}{2}\;w(A+B).
\end{align}
\end{theorem}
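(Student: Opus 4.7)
The plan is to reduce the statement to the unitarily invariant norm inequality in Lemma \ref{heinz_norm_bound} by sandwiching everything between operator norms and numerical radii via Proposition \ref{prop_1}. For this to be legal, I first need to know that each matrix whose numerical radius appears in the inequality lies in $\mathcal{S}_\alpha$, so that Proposition \ref{prop_1} applies to it.

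The sectoriality check is the only nontrivial preliminary. Since $A\sharp_t B = A\sigma_f B$ with $f(x)=x^t \in \mathfrak{m}$, Proposition \ref{prop_asigmab_s} gives $A\sharp_t B, A\sharp_{1-t}B, A\sharp B \in \mathcal{S}_\alpha$. The sector $\mathcal{S}_\alpha$ is closed under addition and under multiplication by positive scalars (a fact already invoked in the proof of Proposition \ref{prop_asigmab_s}), so $\mathcal{H}_t(A,B) = \tfrac{1}{2}(A\sharp_t B + A\sharp_{1-t}B) \in \mathcal{S}_\alpha$ and $A+B \in \mathcal{S}_\alpha$ as well.

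With these memberships in hand, the right inequality is a three-step chain: Proposition \ref{prop_1} (first direction) to drop from $w(\mathcal{H}_t(A,B))$ to $\|\mathcal{H}_t(A,B)\|$, then the right half of Lemma \ref{heinz_norm_bound} applied to the operator norm to move to $\tfrac{\sec^3\alpha}{2}\|A+B\|$, and finally Proposition \ref{prop_1} (the reverse direction, i.e., $\|X\|\leq \sec\alpha\;w(X)$ for $X\in\mathcal{S}_\alpha$) applied to $A+B$ to land at $\tfrac{\sec^4\alpha}{2}\,w(A+B)$.

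The left inequality is perfectly symmetric. Write
\begin{align*}
w(A\sharp B) \leq \|A\sharp B\| \leq \sec^3\alpha\;\|\mathcal{H}_t(A,B)\| \leq \sec^4\alpha\;w(\mathcal{H}_t(A,B)),
\end{align*}
where the first inequality is the trivial $w\leq\|\cdot\|$, the second is the left half of Lemma \ref{heinz_norm_bound} rewritten as $\|A\sharp B\|\leq \sec^3\alpha\;\|\mathcal{H}_t(A,B)\|$, and the third is Proposition \ref{prop_1} applied to $\mathcal{H}_t(A,B)\in\mathcal{S}_\alpha$. Multiplying by $\cos^4\alpha$ yields the desired lower bound. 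The only place where anything could go wrong is the sectoriality of $\mathcal{H}_t(A,B)$ and $A+B$, which is the point I would record explicitly at the start of the proof.
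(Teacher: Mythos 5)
Your proposal is correct and follows essentially the same route as the paper: both inequalities are obtained by sandwiching Lemma \ref{heinz_norm_bound} between applications of Proposition \ref{prop_1} (using $w(X)\leq\|X\|\leq\sec\alpha\,w(X)$ for sectorial $X$). Your explicit verification that $\mathcal{H}_t(A,B)$, $A\sharp B$, and $A+B$ lie in $\mathcal{S}_\alpha$ is a point the paper leaves implicit, so that is a welcome addition rather than a divergence.
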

\begin{proof} We prove the first inequality.
\begin{align*}
w(A\sharp B)&\leq\;\|A\sharp B\| \hspace{2.2cm}\text{(by Proposition\; \eqref{prop_1})}\\
&\leq \sec^3\alpha\;\|\mathcal{H}_t(A,B)\|\hspace{2.2cm}\text{(by Lemma\; \eqref{heinz_norm_bound})}\\
&\leq \sec^4\alpha\;w(\mathcal{H}_t(A,B)).\hspace{2.2cm}\text{(by Proposition\; \eqref{prop_1})}
\end{align*}
We now prove the second inequality.
\begin{align*}
w(\mathcal{H}_t(A,B))&\leq \|\mathcal{H}_t(A,B)\|\hspace{2.2cm}\text{(by Proposition\; \eqref{prop_1})}\\
&\leq \sec^3\alpha\;\|\dfrac{A+B}{2}\|\hspace{2.2cm}\text{(by Lemma\; \eqref{heinz_norm_bound})}\\
&\leq \dfrac{\sec^4\alpha}{2}\;w(A+B).\hspace{2.2cm}\text{(by Proposition\; \eqref{prop_1})}
\end{align*}

\end{proof}

The proof is complete.
\begin{corollary} Let $A, B\in \mathcal{S}_\alpha$. Then for $t\in(0,1),$
\begin{align}
\cos\alpha\;w^{\frac{1}{2}}(AB)\leq \mathcal{H}_t(w(A),w(B)).
\end{align}
\end{corollary}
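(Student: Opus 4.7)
The plan is to derive the bound by chaining two facts already available in the paper: the submultiplicative estimate for the numerical radius of sectorial matrices (Theorem \ref{nemu_for_AB}) and the elementary observation that the scalar Heinz mean dominates the geometric mean.

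First I would invoke Theorem \ref{nemu_for_AB} to write $w(AB)\le \sec^{2}\alpha\; w(A)w(B)$. Taking square roots and multiplying both sides by the positive quantity $\cos\alpha$ gives
\[
\cos\alpha\; w^{1/2}(AB)\;\le\;\sqrt{w(A)\,w(B)}.
\]
This reduces the problem to showing that the right-hand side is bounded by $\mathcal{H}_t(w(A),w(B))$ for every $t\in(0,1)$.

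For that last step, I would exploit the scalar identity $a\sharp_{t}b=a^{1-t}b^{t}$, which together with the AM--GM inequality yields
\[
\mathcal{H}_t(a,b)=\frac{a^{1-t}b^{t}+a^{t}b^{1-t}}{2}\;\ge\;\sqrt{a^{1-t}b^{t}\cdot a^{t}b^{1-t}}\;=\;\sqrt{ab}
\]
for any positive scalars $a,b$. Specializing to $a=w(A)$ and $b=w(B)$ gives $\sqrt{w(A)w(B)}\le \mathcal{H}_t(w(A),w(B))$, and combining this with the previous estimate completes the proof.

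There is no real obstacle here: the nontrivial content is already packaged in Theorem \ref{nemu_for_AB}, and the remaining bound is a scalar AM--GM applied to the two summands of $\mathcal{H}_t$. The only things to keep in mind are that $\cos\alpha>0$ so the inequality direction is preserved when multiplying, and that the resulting estimate is automatically uniform in $t\in(0,1)$ because the lower bound $\sqrt{ab}$ for $\mathcal{H}_t(a,b)$ is itself $t$-independent.
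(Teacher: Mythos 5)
Your proposal is correct and follows exactly the paper's own argument: apply Theorem \ref{nemu_for_AB}, take square roots and multiply by $\cos\alpha$ to get $\cos\alpha\,w^{1/2}(AB)\leq\sqrt{w(A)w(B)}$, then bound the geometric mean by the Heinz mean via AM--GM. The only difference is that you spell out the scalar AM--GM step that the paper leaves implicit.
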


\begin{proof} By Theorem \ref{nemu_for_AB}, we get
\begin{align*}
\cos\alpha\;w^{\frac{1}{2}}(AB)\leq \sqrt{w(A)w(B)}\leq\mathcal{H}_t(w(A),w(B)).
\end{align*}

\end{proof}

{\tiny \vskip 1 true cm }
{\tiny (Y. Bedrani) Department of Mathematics, The University of Jordan, Amman, Jordan. 

\textit{E-mail address:} \bf{yacinebedrani9@gmail.com}}
{\tiny \vskip 0.3 true cm }
 {\tiny (F. Kittaneh) Department of Mathematics, The University of Jordan, Amman, Jordan. 
 
  \textit{E-mail address:} \textit{E-mail address:} \bf{fkitt@ju.edu.jo}}
 {\tiny \vskip 0.3 true cm }
{\tiny (M. Sababheh) Dept. of Basic Sciences, Princess Sumaya University for Tech., Amman 11941, Jordan.
 
\textit{E-mail address:} \bf{sababheh@psut.edu.jo}}

 {\tiny \vskip 0.3 true cm }

\end{document}